\newtheorem{Alg}{Algorithm}
\newtheorem{Lem}{Lemma}
\newtheorem{Prop}{Proposition}
\newtheorem*{Rem}{Remark} 
\newtheorem*{Main}{Main Theorem}
\definecolor{mygreen}{rgb}{0,.5,0}
\colorlet{mypurple}{-green!40!yellow}
\newcommand{\bce}{\begin{center}}
\newcommand{\ece}{\end{center}}
\newcommand{\mat}[4]{\left( \begin{array}{cc}
{#1} & {#2} \\
{#3} & {#4} \end{array} \right) }
\newcommand{\alp}{{\alpha}}
\newcommand{\del}{{\delta}}
\newcommand{\eps}{\epsilon}
\newcommand{\modsurf}{\ensuremath{\Gamma(1) \backslash {\mathcal H}}}
\newcommand{\modu}{\ensuremath{\Gamma(1)}}
\renewcommand{\phi}{\varphi}
\renewcommand{\theta}{\vartheta}
\renewcommand{\Xi}{\varXi}
\begin{document}
\title{Individual Closed Horocyclic Orbits on the Modular Surface}
\author{Marvin Knopp}
\address{Temple University\\Philadelphia, PA
19022}
\email{knopp@temple.edu}
\author{Mark Sheingorn}
\address{220 Shaker Blvd\\Enfield, NH 03748}
\email{marksh@alum.dartmouth.org}
\urladdr{http://www.panix.com/~marksh}
\subjclass[2010]{30F35,45,10 11F06, 37D40}
\date{\today}

\begin{abstract} We  track the trajectories of  individual horocycles on  the modular surface.     Our tracking is constructive,  and we thus   \emph{effectively} establish topological transitivity and even line-transitivity for the horocyclic flow.     We also describe homotopy class jumps that occur under continuous deformation of horocycles.
\end{abstract}

\maketitle

\section{Motivation}

\subsection{Fourier Coefficients of Modular Forms}

This investigation was begun in response to a question that arose in recent work by M. Knopp and G. Mason 
on vector-valued modular forms (see \cite{mK} and  the bibliography  therein).  As in  the classical scalar-valued case, the estimation of Fourier coefficients plays an important role here. Thus far, the only methods available to estimate these coefficients are elaborations of the simple method that Hecke devised 
\cite{eH} 
to prove the now standard estimate

\[ a_n = O(n^{k/2}), \, n \to \infty, \]

for the coefficients of cusp forms of arbitrary real weight $k > 0$, with unitary multiplier system on a discrete subgroup of $SL(2, \mathbb{R}))$ with finite volume.  Hecke's method is as elegant as it is simple, involving little more than a creative use of the maximum modulus principle.  In due course it has emerged that Hecke's proof is applicable to the larger class of entire forms (under the same restrictions stated above on weight $k$, multiplier system and  group),  yielding in this case the estimate 

\[ a_n = O(n^{k}), \, n \to \infty. \]

More recent work of Knopp and Mason extended the applicability of Hecke's approach, first to ``generalized" modular forms, then to ``normal" vector-valued modular forms and, finally, to the most general case: ``logarithmic" vector-valued forms. Generalized modular forms do not require the multiplier system to be unitary.  A normal vector-valued form is a (finite length) vector of certain left finite $q$-series (i.e., exponential series); in the logarithmic case the latter condition is weakened, with $q$-series replaced by a sum of products of polynomials and $q$-series.

Hecke's method employs integration of the modular form over a period-long piece of a horocycle  to obtain and integral expression for the coefficient $a_n$, a device familiar in Fourier series. Simple considerations show that the best choice of horocycle path is the horizontal length one interval of height $1/n$, denoted $\mathcal{I}_n$ below. When the method is applied  to generalized modular forms, and even to the more general case of normal vector-valued modular forms, the desired estimates follow from Hecke's method enhanced by an application of Eichler's important word-length estimate for finitely generated discrete groups \cite{mE}.
For logarithmic vector-valued modular forms the situation is even more complex and successful application of the method requires a new, elementary inequality related to the canonical representation of words in \modu \ in terms of the two standard generators 
\begin{equation}
\label{T S def}
T := \mat{1}{1}{0}{1} \text { and } S := \mat{0}{-1}{1}{0}.
\end{equation}

In the classical case Hecke's method gives the estimate cited above, while in the generalizations Knopp and Mason were able to prove
\[ a_n = O(n^{\alp}), \, n \to \infty, \, \alp \in \mathbb{R}^+, \]
but without establishing a connection between $\alp$ and the weight $k$.  However, the results contained herein describe the path of the projection to the modular surface \modsurf \ of the horocycle $I_{n} := \{ z = x + i/n \: | \: x \in [0,1], n \in \mathbb{Z}^+\}.$  Since the description is given in terms of Ford circles, this suggests the possibility of applying the circle method instead of Hecke's method, especially since even the weak estimate of $a_n$ obtained in the logarithmic case requires an observation, albeit elementary, regarding the projection to the \modsurf \ of $\mathcal{I}_n$.  Potentially, this will yield a significant improvement in our knowledge of the exponent $\alp$ in the more general cases. Such an improvement would be analogous to thee well-known advantage yielded by the circle method over the simpler Hecke method in estimating Fourier coefficients of classical Eisenstein series.

\subsection{Irrationally Anchored Horocycles}

As we will describe in \ref{hch} below, the fundamental horocycle for \modu \ is $\{z | z = x + iy, \, x \in \mathbb{R}, \,  y \geq 1.\}$ The images of this horocycle under \modu \ are anchored  the rationals $p/q$ and have euclidean radius  $1/(2q^2)$. The boundary of the horocycle maps to $y = 1$ under   \modu \ elements depending on $p/q$.

A (distinctly) secondary motivation for this paper is to facilitate a future attempt the study the paths where the anchor point is not rational.  There is an immediate obstacle:  What should the euclidean radius be?  There seems to be no intrinsic choice.  It might be interesting, therefore, to fix a real $\alp$ and study the projections as the radii go to zero and also to infinity, perhaps on natural sequences like $\{1/n\}$ and $\{n\}$.  Would the collection paths on \modsurf \ have any ergodic properties? The authors are unaware of any previous study of such curves.  

The continued fraction of $\alp$ would surely play a leading role, and that leads directly to  the path of $\mathcal{I}_n$.

\section{Introduction}\label{int}

\subsection{Main Result}  \modsurf  \ of the horocycle $I_{n} := \{ z = x + i/n \: | \: x \in [0,1], n > 0\}.$  
The standard fundamental region, denoted  \emph{SFR}, for  the full modular group \modu, is the set of $z$ satisfying $\{ z = x + iy \; | \; 1/2 \leq x < 1/2, \text{ and  } |z| > 1,  \text{ together with the portion of  } |z| = 1 \text { with } 0 \leq \Re{z} \leq 1/2\}$.  So defined the $\text{SFR}$ contains one elliptic fixed point of order three (\emph{efp3}),  denoted as usual  $\rho = 1/2 + (i \sqrt{3})/2$   and  one  elliptic fixed of order two (\emph{efp2}), the point $i$.   The side pairings are accomplished with $T$ and $S$ defined in  (\ref{T S def}). 

We examine horocycles  $I_\alp := \{ z = x + i\alp \: | \: x \in [0,1], \alp > 0\}$ whose common point (see the Nielsen-Fenchel manuscript \cite{wF}) is $\infty$.  The interesting case is  that of small $\alp$.    Each of these horocycles projects to a closed curve,  as the horizontal line between $i\alp$ and $1 + i\alp = T(\alp)$ is a fundamental segment. That arc has hyperbolic length $1/\alp$. However, the hyperbolic distance between $i\alp$ and $1 + i\alp$ is about $\log{(1/\alp)}$.  In that sense the paths we study are long.

The goal of this work is to obtain explicit results about the dynamics of horocycles.

\begin{Main}\label{Main}
 Let $H$ be (one of the two) horocycles on \modsurf \ passing thru $z \in SFR$ at angle $\Psi$ to the horizontal. Let  $\eps > 0 $ be arbitrary. Then for any angle $\Theta$ it is possible to explicitly construct an element $A = A_\eps \in \modu$ with $A(H)$ making an angle with the horizontal within $\eps$ of  $\Theta$ at $A(z)$. 
 \end{Main}

\begin{Rem}  Since the SFR is convex, this recurrence  is enough to establish topological transitivity and even line-transitivity for the horocyclic flow \underline{effectively}. (Compare \cite{sD1}.)  It is far short of ergodicity, however.
\end{Rem}
\bigskip

\subsection{Contents of the paper}

We study closed orbits --- these are horocycle boundaries where the real {\em anchor} of the horocycle is rational. Of course, each such horocycle is \modu-equivalent to a horizontal horocycle.

The introductory geometry and summary or prior work on the horocyclic flow appears in sections ~\ref{int}  thru ~\ref{target}. (This part of the paper includes subsection \ref{Versus} which indicates that, in contrast to closed geodesics, the paths of closed horocycles thru SFR and the words arising from the resulting products of side-pairings of SFR are not closely related.)   Section ~\ref{param} provides a parametrization useful in the fine study of closed horocycles.  That study appears in sections ~\ref{different} and ~\ref{ruling} which offer Algorithms \ref{Alg1} and \ref{Alg2}, respectievly. Either of these algorithms is sufficient to establishing Theorem \ref{Main}, and with it (effective)  line-transitivity of the horocyclic flow on \modsurf.

 Section \ref{homo} examines the homotopy classes  on \modsurf \ of lifts of the closed curves $I_\alp$ as $\alp \to 0.$   
Words of caution:  These curves are not geodesics.  Even though \modsurf, like all Riemann surfaces, has constant negative curvature, there is often no shortest member of a homotopy class.  Here,  as usual in point set topology,   two curves $f(t)$ and $g(t), \: t \in [0,1]$ on \modsurf \ are homotopic if there is  a continuous function $H$  from  $[0,1]^2$ to \modsurf \ with $H(t,0) = f(t)$ and $H(t,1) = g(t)$.   Also, even though \modsurf \ has genus zero there are homotopically non-trivial closed geodesics by virtue of  the cusp, the efp2, and efp3.  Indeed for any hyperbolic element in \modu,  the {$h$-line or hyperbolic line in upper half plane connecting  its fixed points will project to  a non-trivial closed  geodesic on \modsurf.

 The main phenomenon established in section \ref{homo} is that the homotopy classes only change as the we encounter elliptic fixed points in descending. This extends  to all of $\alp >0$.  Note that there can no such stability for, say, the set of vertical geodesics with real part in the interval $\mathcal{I} := [\alp - \eps, \alp + \eps]$, where $\eps$ is much  smaller than $\alp$.  This is because the trajectories are given by the continued fraction expansion of the the real end, i.e., the {\em foot} of the vertical;   once the partial denominators in the expansions  are greater than $1/\sqrt{2\eps}$, the presence of the foot in $\mathcal{I}$ provides no further information about later partial denominators, and therefore no information about the path.

Two classical (\cite{bS}, pp. 10-12) computational lemmata which are useful in determining said homotopy classes  appear in subsection ~\ref{lemmata}. These are followed by all the homotopy classes  down to $\alp = 1/(2\sqrt{3})$ in subsection ~\ref{classes}. \\

 \subsection{Comments on Previous Work}\label{previous} 
 
The study of the geodesic orbits on arithmetic surfaces is extensive (see the (somewhat dated) bibliography in \cite{mS2} as a starting point).  In fact, complete knowledge of the vertical geodesic orbits\footnote{These are often called \emph{up the cusp} geodesic orbits.} is equivalent to complete knowledge of horocyclic orbits as the two are orthogonal and the action of $PSL(2, \mathbb{R})$ on such surfaces is conformal. However,  as explained below in section \ref{orthtrans}, the correspondence is not well suited to effective computation.  We offer two ways to do better, both involving continued fractions.\\

Though the efforts on geodesy dominate, there has been a significant previous work on the horocyclic orbits.  Among the mathematicians involved in this effort, which is usually said to have begun with Hedlund in 1936 \cite{gH},  are Hejhal \cite{dH}, Minsky and Weiss \cite{yM}, Karp and Peyerimhoff \cite{lK}, Masur \cite{hM} and Ratner \cite{mR}.  
 
 A simply {\em stated} sample of this work is  Masur's 1981  paper   \cite{hM} proving that the horocyclic flow is transitive --- meaning that any two open sets in the  unit tangent bundle will eventually intersect at some time. 
 
The 1984  paper \cite{sD} of Dani and Smillie showed that the horocyclic flow is uniformly distributed.  In this work there is almost always a finiteness condition on the group/surface.  Infinite volume  features the transience of almost all paths (into collars of free sides, for example). 

In 1981 Sarnak \cite{pS} treated the asymptotics of closed horocycles on finite volume fuchsian groups with cusps.  The limiting behavior is the concern.

The 1986 paper of Dani \cite{sD1} takes treats dense horocyclic orbits in a far more general setting than  the modular surface considered here.

 A last example, perhaps the closest in spirit to this paper, is that of the 2000 paper of Hejhal \cite{dH}.  This paper treats closed horocycles as we do here, but for more general fuchsian groups.  It infers ergodic properties from test-function integral averages.
 
 The modular group as a separate object is not treated and discussion of individual paths or, equivalently, elements of the unit tangent bundle are either not treated at all or not in the foreground.

\section{Basic Closed Horocycle Geometry}

 \subsection{Horizontal (Closed) Horocycles}\label{hch}
 
Recall  $I_{1/n} := \{z\; | \; z = x+i/n, \; x \in [0,1]$. (In section \ref{homo} we will consider more general $I_\alp$.) $I_{1/n}$  may be lifted to the standard fundamental region (SFR)  for \modu, of course.  The first aim here is to give the sequence of \modu \ elements effecting this lift.  We will list them going from left to right ---  as the anchors increase. 

Consider the  upper half plane closed horocycle $ \mathcal{H}  = \{z = x+iy \; |  \; y \geq 1\}$.   This may be mapped by the   \modu \ transformation $\mat{p}{*}{q}{*}$ to the horocycle anchored at $p/q.$ Of course this is not unique, but we will choose the stars to be the numerator and denominator of the next largest element of the Farey sequence (of level $[\sqrt{n}]$ --- see below).  Below these are denoted $p'$ and $q'$.  (These images of  $\mathcal{H}$ are disjoint --- apart from possible tangency --- and they have Euclidean radius $1/(2q^2)$.  They are called {\em Ford circles} and we denote them $\mathcal{F}_{p/q}$.)

We shall be concerned (only) with the Ford circles with $0 \leq p/q \leq 1$ and $q \leq \sqrt{n}$.  This is exact set of horocycles that  intersect $I_{1/n}$.  We illustrate in Figure 1 the circles for $q=5$.

 \begin{figure}[h]\label{Fi:Ford}
 \centering \includegraphics[scale=.4]{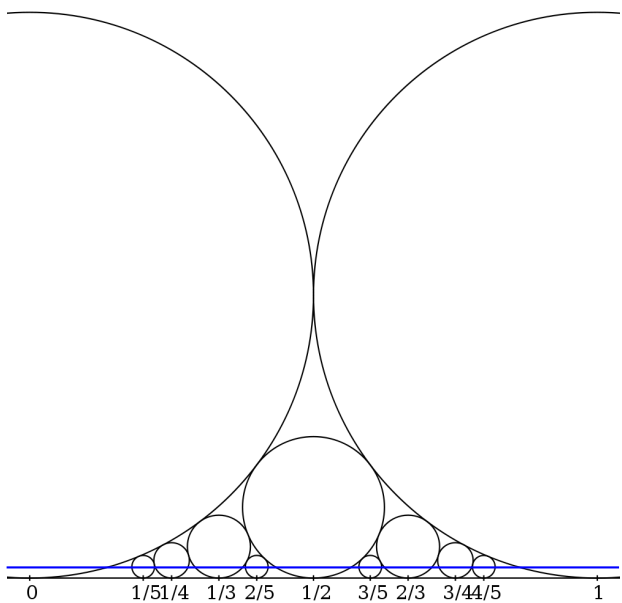}
\caption{Ford Circles for $ q \leq 5$ and {\color{blue} $I_{1/n}$}}
\end{figure}

 (If $n$ is a square, then $I_{1/n}$  is tangent to the smallest Ford circles in our considered set.  If not, it enters and leaves all these Ford circles.)

Going from left to right, the order in which  the anchors of the circles which are encountered by $I_{1/n}$ is the Farey sequence at level $q$, the largest denominator.  Standard Farey asymptotics (\cite{wL}, Vol. 1, p. 120) indicate that there are about $(3q^2)/\pi^2 \approx (3n)/\pi^2$ circles in our set. 
The curve $I_{1/n}$ is horizontal, in particular it enters and leaves each Ford circle at the same height; therefore,  the closest its lift to the SFR  comes to $\infty$ (that is,  the furthest it gets from the boundary of this horocycle) is in the middle, at $x = p/q$.  The height of the lift of that point is  $n/q^2$.  This is greater than  or equal to $1$;  the value is without upper bound,  as $q$ might be small  and $n$ large. The anchor of the lifted $I_{1/n}$ is $-q'/q.$    Knowing that and maximum height of the lifted $I_{1/n}$  is enough to give the $x$-coordinates of the entry points of this lift to $\mathcal{H}$. These are $-q'/q \pm \sqrt{n^2/q^4 -1}$. 

After the lift reaches the boundary of $\mathcal{H}$, it is clearly necessary to apply $S$ in order to remain in the SFR.  This is not readily viewable in the Ford circles; however the next Ford circle we enter gives us the (absolute value of the) next exponent on $T$.

In the geodesic setting, where the focus is on primitive hyperbolic elements, keeping track of the side incidences gives a word in $S$ and $T$ for the primitive element. We shall see that this is not the case for horocycles.
(Here a  hyperbolic element in \modu \ is {\em primitive} if it is not the power of another hyperbolic element in \modu.)

We now focus on the portion of $I_{1/n}$ in each Ford circle at $p/q$. Call this segment  $I_{1/n, \, p/q}$. We want to lift  $I_{1/n,\, p/q}$ into the horocycle $\mathcal{H}$.  After a direct computation, one finds:

\begin{Lem}\label{Ford lift} The lift of $I_{1/n, \, p/q}$, the portion of $I_{1/n}$ in $\mathcal{F}_{p/q}$,  to SFR is effected by $A := \mat{-q'}{p'}{q}{-p}$. The apex is $\frac{-q'}{q} +\frac{in}{q^2} = A(\frac{p}{q} + \frac{i}{n})$.

\vspace{.1in}

Also, $A(\frac{p}{q} + \frac{i}{2q^2}) = \frac{-q'}{q} + 2i$, $A(\frac{p}{q} + \frac{i}{q^2}) = \frac{-q'}{q} + i$,  and $A(\mathcal{F}_{p/q}) = I_1.$
   \label{hgt}
\end{Lem}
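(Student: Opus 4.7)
The plan is a direct Möbius verification organized around the action of $A$ on the two distinguished points $p/q$ and $\infty$. First I would check that $A \in \modu$: the entries are integers, and $\det A = pq' - p'q = \pm 1$ by the Farey adjacency of $p/q$ and $p'/q'$ at the relevant level. With the Farey sign convention chosen so that $p'q - pq' = +1$, the matrix $A$ represents an element of $\modu$. A short calculation gives $A(p/q) = \infty$ (the denominator vanishes and the numerator does not) and $A(\infty) = -q'/q$, so $A$ sends the horocycle tangent to the real axis at $p/q$ — that is, the Ford circle $\mathcal{F}_{p/q}$ — to a horocycle at $\infty$, i.e., a horizontal line, while sending the horizontal horocycle $I_{1/n}$ to a horocycle tangent to the real axis at $-q'/q$. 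Thus $A(I_{1/n,\,p/q})$ is an arc of a Euclidean circle tangent to $\mathbb{R}$ at $-q'/q$, sitting inside a horizontal strip — which, up to translation by a power of $T$, is the SFR.

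All the quantitative claims then fall out of one substitution. Plugging $z = p/q + i/n$ into $A(z) = (-q'z + p')/(qz - p)$ and using $p'q - pq' = 1$ yields
\[
A\!\left(\tfrac{p}{q} + \tfrac{i}{n}\right) \;=\; -\tfrac{q'}{q} + \tfrac{in}{q^2},
\]
from which the two special-point identities are just the cases $n = 2q^2$ and $n = q^2$. To see that this image really is the apex of the lifted arc, note that $p/q + i/n$ lies on the vertical geodesic $\Re z = p/q$ joining $p/q$ to $\infty$, and $A$ carries this to the vertical $\Re z = -q'/q$ joining $\infty$ to $-q'/q$; since the image horocycle is tangent to $\mathbb{R}$ at $-q'/q$, its topmost point lies on this vertical — which is precisely where $A(p/q + i/n)$ sits.

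The identification $A(\mathcal{F}_{p/q}) = I_1$ is similar in spirit: the Ford circle passes through $p/q$ and $A(p/q) = \infty$, so its Möbius image is a horizontal line; the $n = q^2$ calculation above then pins that line down to $\{y = 1\}$, which is the horocycle $I_1$. There is no substantive obstacle in any of this — the whole argument is Möbius algebra plus one Farey identity — but the one place that demands care is the sign bookkeeping: the Farey neighbor $p'/q'$ must be selected so that $p'q - pq' = +1$ (hence $\det A = +1$), or else the Möbius action exchanges the half planes and every formula picks up a spurious sign. Once that convention is fixed at the outset, the proof is routine.
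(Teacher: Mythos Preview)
Your approach is exactly what the paper does: it simply says ``After a direct computation, one finds'' and states the lemma, so your M\"obius verification \emph{is} that direct computation. The one slip is in the sign convention you flag at the end: $\det A = (-q')(-p) - p'q = pq' - p'q$, so requiring $p'q - pq' = +1$ gives $\det A = -1$, not $+1$. You want $pq' - p'q = +1$ (equivalently, $p'/q'$ is the Farey neighbor just \emph{below} $p/q$, which is consistent with the paper's ``next largest''); with that convention your substitution indeed yields $-q'/q + in/q^{2}$, and the rest goes through exactly as you describe.
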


The fact that the real parts of the various $A$-images do not depend on $n$ (which must be greater than $q^2$ for this arc to be in SFR) means that all these lifts are (large) horocycle arcs based at the same point, $-q'/q$.  This is a vantage point where $q$ is fixed and $n \to \infty$ (and $I_{1/n}$ approaches the real axis).  Note also that the continued fraction of $q'/q$ is easily gotten from that of $p/q$.

 It is of interest that the hyperbolic distance between $i/n$ and $1 + i/n$ is 
 \[ \log(\frac{n^2 + 2 + \sqrt{n^2 + 4}}{2}) \backsim 2\log{n}. \] However, the arc length of of the horizontal horocycle between these two points is far larger; it is exactly $n$.

Also, note that if the horocycle encounters an efp2, the situation is different than for a geodesic, which reverses direction along the same h-line.  For the horocycle, we proceed along a different lift.  The salient example is $I_1$ hitting $i$.  An application of $S$ then has the path proceeding along the horocycle with anchor 0 and euclidean center $i/2$ --- that is, $\mathcal{F}_{0/1}$.

\subsection{Word Versus Path: The case of $I_{1/(n^2+1)} $} \label{Versus}

This section  considers the relationship of the incidence sequence in SFR of the lifts of $I_{1/n}$ and the word obtained by multiplying out the side pairing. In the case of primitive hyperbolics and the lifts of their axes, these are equivalent.  Here we see that this is far from the case for $I_{1/n}$.  (To ease computation we use $I_{1/(n^2 + 1)}$ to illustrate.  There are no new issues arising for general $n$, however.)

\subsubsection{The Exponent of the Translation $T := z \mapsto z+1$}

The aim of this subsection is to prove

\begin{Prop}\label{path}
 Lift $I_{1/(n^2 +1), \, p/q}$  to $\{ y \geq 1\}$ with $A$ as in Lemma \ref{Ford lift} and proceed along that path from left to right.  Then the powers of $T$ required to remain in SFR switches sign twice, starting positive, then negative, then back to positive.  Summing these exponents of$T$ gives  
$-(2n)/q  \pm 1$.  Thus the exponent is always negative.  \end{Prop}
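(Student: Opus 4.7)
The plan is to track the $T$-shifts needed along the lifted arc by identifying the three monotone stages of its horizontal coordinate and counting half-integer crossings in each.

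First, I would use Lemma \ref{Ford lift} to describe the lifted arc. Under $A$, the chord $I_{1/(n^2+1),\,p/q}$ becomes the $y \ge 1$ portion of the Euclidean circle centered at $(-q'/q,\, R)$ with radius $R = (n^2+1)/(2q^2)$: the \emph{upper cap} of this circle. Its horizontal extent is the full interval $[-q'/q - R,\, -q'/q + R]$, which is strictly wider than the interval $[x_L,x_R]$ joining the bottom endpoints at $y = 1$, namely $x_{L,R} = -q'/q \mp \sqrt{n^2+1-q^2}/q$. Traversing the cap from $(x_L,1)$ to $(x_R,1)$, the $x$-coordinate is therefore non-monotone with exactly three monotone stages: (i) $x$ \emph{decreases} from $x_L$ to the leftmost point $-q'/q - R$; (ii) $x$ \emph{increases} through the apex to the rightmost point $-q'/q + R$; (iii) $x$ \emph{decreases} to $x_R$.

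Second, I would track the shift $k(x) = -\lfloor x + 1/2 \rfloor$ needed to bring $x$ into the strip $[-1/2,1/2)$. At each half-integer crossing of $x$, $k$ changes by $+1$ in a decreasing stage (so $T^{+1}$ is applied) and by $-1$ in an increasing stage (so $T^{-1}$ is applied). This produces the sign pattern $+,-,+$ asserted in the statement, with the three sign-blocks corresponding to stages (i)--(iii); in the degenerate regime where stages (i) and (iii) are too short to contain a half-integer crossing, the $+$ blocks are simply empty. Summing the signed exponents is the same as computing the net $k$-change $k_R - k_L = \lfloor x_L+1/2\rfloor - \lfloor x_R+1/2\rfloor$. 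Routine floor-function bounds give that this is within $1$ of $x_L - x_R = -2\sqrt{n^2+1-q^2}/q$, and a first-order estimate of the square root shows $2\sqrt{n^2+1-q^2}/q$ differs from $2n/q$ by $O(1/(nq))$; hence the sum equals $-2n/q \pm 1$. Since $q \le n$ (otherwise the cap is empty), $2n/q \ge 2$, so the sum is at most $-1$ and therefore always negative.

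The main obstacle I anticipate is controlling the $\pm 1$ slack cleanly, because both the floor discrepancy and the square-root approximation contribute and one must verify their combined error stays within $\pm 1$ uniformly in $n$ and $q$, with a little care for the Farey-neighbor bound $q'\le q$. Once the three-stage $x$-trajectory is in hand, however, the sign-pattern claim and the always-negative conclusion are direct counting consequences.
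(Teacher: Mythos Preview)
Your approach is essentially the paper's: both identify the lifted arc as the $y\ge 1$ cap of the horocycle anchored at $-q'/q$ with Euclidean radius $R=(n^2+1)/(2q^2)$, both read off the three monotone $x$-stages (left, right, left) giving the $+,-,+$ sign pattern, and both obtain the net exponent by comparing the real parts of the four turning points (the paper calls them $\alpha,\beta,\gamma,\delta$: the two $y=1$ intersections and the two ends of the horizontal diameter). Your telescoping observation that the net equals $\lfloor x_L+\tfrac12\rfloor-\lfloor x_R+\tfrac12\rfloor$ is a mild streamlining of the paper's stage-by-stage tally, but the substance is identical.

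One correction worth noting: your claim that $2\sqrt{n^2+1-q^2}/q$ differs from $2n/q$ by $O(1/(nq))$ is too optimistic. The actual discrepancy is
\[
\frac{2(q^2-1)}{q\bigl(n+\sqrt{n^2+1-q^2}\bigr)} = O\!\left(\frac{q}{n}\right),
\]
which can be close to $2$ when $q=n$. The paper makes the same approximation (writing $\alpha,\delta\sim\pm n/q$) and asserts ``within $1$'' without further justification, so its argument is equally informal at that extreme; this is not a gap in your proposal relative to the paper, and you were right to flag the $\pm1$ slack as the point needing care.
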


From Lemma \ref{Ford lift}, we see that $A$ lifts $\mathcal{F}_{p/q}$ above $y=1$.  This is just the  horocycle $\mathcal{F}_{-q'/q}$. Of course  $-q'/q < 0$. We shift  $\mathcal{F}_{-q'/q}$ into SFR by adding $\lfloor q'/q \rfloor$.   Denote this again as $\mathcal{G}_{-q'/q}$.  Pass the line $y = 1$ thru $\mathcal{G}_{-q'/q}$.  The intersection points are at $\mp n + \lfloor q'/q \rfloor + i$
  
Start at the left intersection of $y = 1$ with $\mathcal{G}_{-q'/q}$ and travel over the latter horocycle until we reach the right intersection.  This whole path lies in $\{ y \geq 1\}$ and as such successive arcs maybe be translated to SFR using only $T^{\pm 1}$. At the outset we go backwards (decreasing real part).  So at this point we use a sequence of $T^1$'s to remain in SFR.  That obtains until we reach the height of the center (euclidean, $\mathcal{G}_{-q'/q}$ isn't a hyperbolic circle) which is $(n^2 +1)/(2q^2)$.  Then we start going forwards (increasing real part).  This requires application of $T^{-1}$. 

We continue with $T^{-1}$ until we return to the height of center when we start going backwards again, requiring $T^1$.  
Thus \emph{in toto} negative direction always prevails since we  have the whole top of $\mathcal{G}_{-q'/q}$ but not the whole bottom.  The final exponent of $T$ is obtained  (differences of the) real parts of four points on the horocycle: $\alp, \beta, \gamma, \delta$.  These are, in order, left intersection with $y = 1$, left intersection with horizontal diameter, right intersection with horizontal diameter,  right intersection with $y = 1$. Since we only care when we leave SFR, not how long we spent there, the real parts are all we need.

Indeed since only the differences of the real parts matter, we might  as well move the anchor to 0.  
This gives $\alp, \delta = \pm \sqrt{(n^2+1)/(q^2) -1} \sim \pm n/q$ and $\beta, \gamma = \pm (n^2+1)/(2q^2)$.
The $T$ exponent tally is then $(n^2 +1)/q^2$ negative ones and $(n^2 +1)/q^2 - (2n)/q$ positive ones.  The exponent in total is  within 1 of $-(2n)/q$.

After leaving $\delta$ heading towards anchor of  $\mathcal{G}_{-q'/q}$ we are below $y = 1$ and thus next an involution will be applied to remain in SFR. For an integer $N$, define the \modu \ involution fixing $N+i$

\begin{equation} \label{E_N} E_N := \mat{N}{-1-N^2}{1}{-N}
\end{equation}

\subsection{Applying $E_N$}  

The aim of this subsection is to offer an example where continuing Lemma \ref{path}'s  lift of  $I_{1/(n^2 +1), \, p/q}$ below $I_1$ immediately leads to two consecutive applications of the same involution.

 In this subsection we let  $q = 1.$  Then Ford circle at $0 = 0/1$ has center $i/2$.  Now $E_0(I_{1/(n^2+1)}) := \mathcal{G}$ is the horocycle anchored at $0$ with euclidean radius $(n^2 + 1)/2$.  $\mathcal{G}$ intersects $\{ y = 1\}$ at  $ \alp, \delta = \mp n + i$, resp.  

Note that we have that $\alp, \delta$ are themselves efp2s, simplifying matters considerably. That is, the involution to be applied has  $N = \mp n$.  By (\ref{E_N}) we have that the anchor of $E_n(\mathcal{G}) = n+ 1/n = (n^2 + 1)/n$. This horocycle is $\mathcal{G}_{(n^2 + 1)/n}$. 

The euclidean center of  $\mathcal{G}_{(n^2 + 1)/n}$ is obtained by using the fact the $E_n(\delta) = \delta = n + i$ is on this circle.  Said radius is $(n^2 + 1)/(2n^2) > 1/2$ showing that we do enter SFR.

The next two side pairings   call in to question the whole notion of a visceral connection between the curve $y = 1/(n^2 +1)$ and the word given by the side incidences. To wit, when $\mathcal{G}$ leaves the SFR at $n+i$ we apply $E_n$ to get back in, as we just saw. The next side incidence is at $n + 2/n +i$.  When $n \geq 3$ this incidence is on the side paired by $E_n$.  But we just used that and it is an involution!  So in our word, the product of the two  $E_n$'s disappears.  Yet we need it for the geometry as $n+i$ is on $\mathcal{G}$. Figure 2 illustrates.

\begin{figure}[h]\label{Fi:ks}
\centering\includegraphics[scale=1.3]{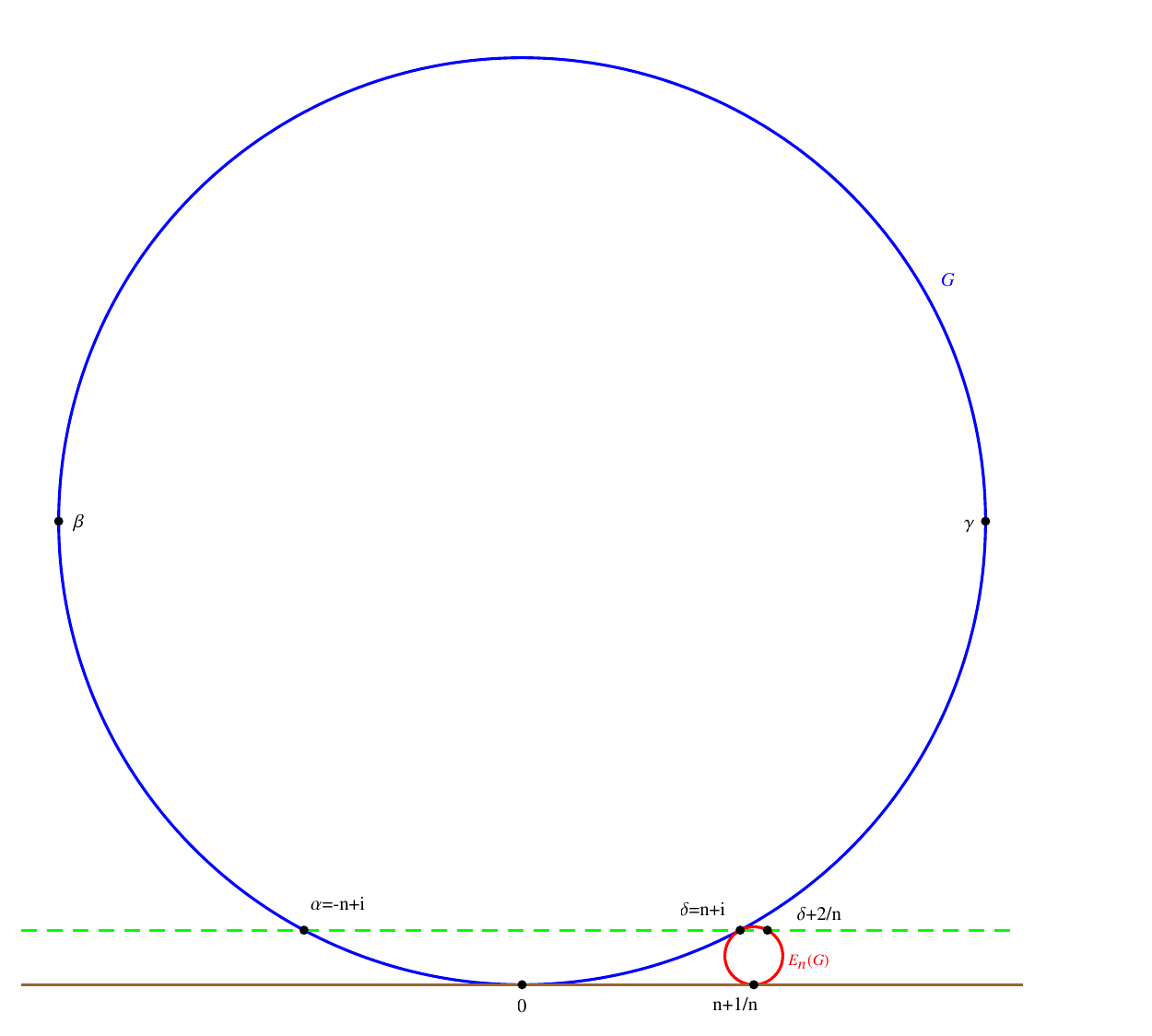}
\caption{$\mathcal{G}$ and $E_n(\mathcal{G})$}
\end{figure}

$\mathcal{G}$ and $E_n(\mathcal{G})$ are in fact tangent at $\delta$.  If they were not, the two horocycles would intersect again and that second intersection would also be a fixed point of $E_n$.  But $E_n$ has only one fixed point in $\mathcal{H}.$

\subsection{Lift Direction}

Next consider the action of $A^{-1}$ on $\mathcal{F}_{p/q}$, where $A$ is from Lemma \ref{hgt}.   The proof of the following Lemma is a direct computation.

\begin{Lem}  With $A$ as in Lemma \ref{Ford lift}
let $\beta > 0$ and $\alp \geq 2$. Then  

\begin{gather*}
A(p/q) = \infty \\
A(\infty) = -q'/q \\
A(\frac{p}{q}\pm \frac{1}{\beta q^2} + \frac{i}{\alp q^2}) = \frac{-q'}{q}  \mp \frac {\alp^2 \beta}{\alp^2 + \beta^2} + i \frac{\alp \beta^2}{\alp^2 + \beta^2}.
\end{gather*}
\end{Lem}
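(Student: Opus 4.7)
The plan is to verify the three identities by direct substitution into the Möbius transformation $A(z) = (-q'z + p')/(qz - p)$, exploiting the Farey relation between $p/q$ and the adjacent fraction $p'/q'$. Before anything else I would pin down which Farey relation is in force: since Lemma~\ref{Ford lift} gave $A(p/q + i/n) = -q'/q + in/q^2$ in the upper half plane, the determinant of $A$ must be $+1$, which forces $pq' - p'q = 1$ (equivalently $p'q - pq' = -1$). With that sign convention settled, the rest is bookkeeping.

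The first two identities are immediate. Plugging $z=p/q$ into $A$ kills the denominator $qz-p$, so $A(p/q)=\infty$. The limit $z\to\infty$ in $(-q'z+p')/(qz-p)$ gives $-q'/q$, which is $A(\infty)$.

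For the third identity, set $w = p/q \pm 1/(\beta q^2) + i/(\alpha q^2)$. I would compute the denominator and numerator of $A(w)$ separately. The denominator reduces cleanly to
\[
qw - p \;=\; \pm\frac{1}{\beta q} + \frac{i}{\alpha q} \;=\; \frac{\pm\alpha + i\beta}{\alpha\beta q}.
\]
For the numerator, the term $p' - q'p/q$ collapses via the Farey identity $p'q - pq' = -1$ to $-1/q$, leaving
\[
-q'w + p' \;=\; -\frac{1}{q} - \frac{q'(\pm\alpha + i\beta)}{\alpha\beta q^2}.
\]
Dividing, the factor $\pm\alpha + i\beta$ in the second term cancels against the denominator, producing a constant contribution of $-q'/q$, while the first term yields $-\alpha\beta/(\pm\alpha + i\beta)$. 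Rationalizing the latter by multiplying by the conjugate $(\pm\alpha - i\beta)/(\alpha^2+\beta^2)$ gives
\[
\frac{-\alpha\beta(\pm\alpha - i\beta)}{\alpha^2+\beta^2} \;=\; \mp\frac{\alpha^2\beta}{\alpha^2+\beta^2} + i\,\frac{\alpha\beta^2}{\alpha^2+\beta^2},
\]
which combined with $-q'/q$ is exactly the claimed expression.

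The only real obstacle is notational: one must keep the two $\pm$ signs aligned (they come from the $\pm$ in $w$ and propagate consistently through the denominator $\pm\alpha + i\beta$ and the rationalization), and one must be careful about the sign convention $p'q - pq' = -1$ versus $+1$, which is what makes $-q'(p/q) + p'$ collapse to $-1/q$ rather than $+1/q$. The hypotheses $\alpha \geq 2$ and $\beta > 0$ play no role in the algebraic identity itself; they are there to guarantee that the input point actually lies inside $\mathcal{F}_{p/q}$ so that the lift is meaningful.
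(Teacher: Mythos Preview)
Your proposal is correct and is exactly what the paper does: the paper merely states that ``the proof of the following Lemma is a direct computation,'' and you have carried out that computation cleanly, including the useful observation that the sign convention $p'q - pq' = -1$ is forced by the formula in Lemma~\ref{Ford lift}. Your remark that the hypotheses $\alpha \ge 2$ and $\beta > 0$ are geometric rather than algebraic is also on point.
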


The key computation, of course,  is the last one. Note that if the imaginary part of the right hand side is at least one, which happens when

\[ \beta \geq \frac{\alp}{\sqrt{\alp -1}}, \]

then the image is in $y \geq 1$ so that the pre-image is in $\mathcal{F}_{p/q}$.  Next,  $\beta > 0$ so the expression in the real part involving $\alp$ and $\beta$ (without the $\mp$) is positive.  This means if  we proceed thru  $\mathcal{F}_{p/q}$ moving from real part smaller than $p/q$ along $I_{1/n}$ to real part greater than $p/q$, we proceed in the opposite direction along the lifted horocycle anchored at $-q'/q$.  (It is only necessary to check the endpoints on the boundary of $\mathcal{F}_{p/q}$.) This is independent  of $p$ and $q$!

 \section{Orthogonality Implies Transitivity}\label{orthtrans}
 
This section uses the fact  that horizontal horocycles are orthogonal to vertical geodesics to establish line-transitivity for the horocyclic flow, albeit ineffectively.

 The horocycles $I_\alp := \{y = \alp \}$ are orthogonal to all geodesics going up the cusp (vertical h-lines). Consider the vertical ending in $x \in [0,1]$.  We know that for almost all $x$, $CF(x)$ contains every finite sequence of positive integers.  Therefore, the usual continued fraction based geodesic tracking mechanism described in \cite{mS2}, shows that for such $x$ the lift of $ J_x := \{z | z = x + iy, y > 0\}$ to \modsurf \ passes near every point in nearly every direction. (This originates in the work of E.  Artin \cite{eA} and C. Bursteyn \cite{cB}.)
 
 \begin{Prop} The pencil of horocycle circles  $I_\alp$, when lifted to \modsurf, is dense in the unit tangent bundle.
 \end{Prop}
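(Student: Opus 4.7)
The plan is to bootstrap the density of a single vertical-geodesic lift — established in the paragraph just above — to density of the entire horocyclic-pencil lift, exploiting the orthogonality between $J_x$ and every $I_\alp$ at each intersection. First I would fix some $x \in [0,1]$ of ``normal'' continued-fraction type (the full-measure set referenced above) so that the canonical lift of $J_x = \{x + iy \: | \: y > 0\}$ to the unit tangent bundle $UT(\modsurf)$ is dense.

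Next, introduce the fiber-preserving map $R : UT(\mathcal{H}) \to UT(\mathcal{H})$ that rotates every unit tangent vector by $\pi/2$ within its fiber. At each point $z = x + iy$, the vertical tangent along $J_x$ and the horizontal tangent along the horocycle $I_y$ are orthogonal, so $R$ sends the canonical lift of $J_x$ bijectively onto the set of unit horizontal tangents along the vertical ray above $x$, i.e.\ onto a subset of the pencil lift itself. Since \modu\ acts on $\mathcal{H}$ by orientation-preserving isometries, and hence conformally, the differential of each $A \in \modu$ commutes with fiberwise $\pi/2$-rotation. Consequently $R$ commutes with the \modu-action on $UT(\mathcal{H})$ and descends to a homeomorphism $\bar R$ of $UT(\modsurf)$.

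Applying $\bar R$ to the dense lift of $J_x$ produces a dense subset of $UT(\modsurf)$ consisting entirely of tangent vectors to horocycles $I_y$ at points $x + iy$, all of which lie in the projection of the pencil $\{I_\alp\}_{\alp > 0}$ to $UT(\modsurf)$. Density of the pencil lift follows immediately.

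The only substantive point to verify is that $R$ descends to the quotient, which is a direct consequence of conformality of the \modu-action; I anticipate no serious technical obstacle beyond unwinding definitions. (Note that this argument is intrinsically non-constructive — it inherits the ineffectiveness of the continued-fraction statement about $J_x$ — which is exactly why the Main Theorem, established in later sections by algorithmic methods, is a genuine strengthening.)
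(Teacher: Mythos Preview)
Your proposal is correct and follows essentially the same approach as the paper: pick $x$ with dense geodesic lift, then use orthogonality of $I_\alp$ and $J_x$ together with conformality of the \modu-action to conclude density of the horocyclic pencil. The paper compresses your rotation-map argument into the single phrase ``Conformality then requires\ldots''; your version simply makes explicit why conformality suffices, namely that the fiberwise $\pi/2$-rotation descends to a homeomorphism of $UT(\modsurf)$.
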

 
 \begin{proof}
The pencil of horocycle circles  $I_\alp$ is orthogonal to each h-line $J_x$.  Choose $x \in \mathbb{R}$ so that the lift of $J_x$ to \modsurf \ in dense in t he unit tanget bundle.  Conformality then requires that the lifts of infinitesimal pieces of the horocycles $I_\alp$ hitting $J_x$   also pass near each point of the unit tangent bundle. 
\end{proof}
 
In a very limited way this is effective --- given an appropriate $x$, and point in the unit tangent bundle,  and an $\eps$, a finite sequence of positive integers is determined. Locating it   in $CF(x)$ to find a point $x + i \alp$ where $I_\alp$ will be within $\eps$ of  the target in the unit tangent bundle. 

The difficulty is the one shared by the study of normal numbers --- though almost all $x$ are appropriate, it is not possible to tell whether a given real number is or not.  Rationals are not, nor are quadratic irrationalities, but is $\pi$ or $\sqrt[3]{2}$?  No one knows.  Appropriate $x$ can be artificially constructed, however.

 \section{Horocycle Parameters --- Construction and Targeting}\label{target}
 
  \subsection{Constructing Horocycles} \label{cons}
In this subsection we investigate the horocyclic flow along $I_{1/n}$.  
 
 Looking just at $I_{1/n}$,  we immediately see that there are huge differences between this and the geodesic flow.  All our horocycles are closed.  Almost all geodesics are not.  As such, while almost all individual geodesics can and do produce transitive (recurrent)  behavior; no single horizontal horocycle can.
 
Taking the  set of all $I_{1/n}$ lifted to SFR --- or even a countable set of such lifts like $((ip)/q, 1 + (ip)/q)$ --- might yield a transitive set in the unit tangent bundle. Each point/direction in the SFR might have a nearby approximation in the countable set of lifts. It also might be possible to study return times for this flow.
 
  Note also that  the relevant Ford circles remain the same for any horizontal  horocycle at height $1/y$ where $n \leq y \leq n+1$. 
  
To establish transitivity it would be sufficient to establish the directional approximation for each point along, say,  the h-line $\Re{z} = 0$.  The reason for this is that, given another SFR point/direction $(\tau, \Psi)$, we horocyclically follow that back to $\Re{z} = 0$, obtain an approximation to the new direction and then invert the process to get an  approximation at $(\tau, \Psi)$. Since the flow lines have no bifurcations, this is valid.  Also, because the SFR has just four sides, we reach a point on $\Re{z} = 0$ in short order. 
  
The point/direction pairs on $\Re{z} = 0$ have two parameters:  the point has an imaginary part $h$ with  $1 \leq h < \infty$.    And the angle $\Psi$ produced with $y = h$ may be chosen in  $0 \leq \Psi < 2 \pi$.  We could have an upper bound of $\pi$ if we were indifferent to the direction of progress on the horocycle. Also, since $y=h$ is horizontal, this is the standard slope.

Given $(h, \Psi)$, which horocycle goes thru $ih$ in direction $\Psi$? From this point forward we will use  interchangeably $x + iy$ and $(x,y)$, each with with $y > 0$, to denote a point in the upper half plane.  What we desire is the horocycle's real anchor point.  Clearly this is equivalent to knowing the euclidean center of the horocycle in terms of $h$ and $\Psi$. Call this center $(c,d)$ so that the horocycle is $(x-c)^2 + (y-d)^2 = d^2$.  (The radius is determined by the requirement of real-axis tangency.)  Since $(0,h)$ is on this circle, we have

\begin{equation} \label{horocycle eq}
c^2 + (h-d)^2 = d^2
\end{equation}

The direction of the tangent to the circle at $(0, h)$ satisfies

\begin{equation} \label{slope eq}
\frac{c}{h-d} = \tan{\Psi}
\end{equation}

N.B. : If one examines the configuration of the horocycle at the point of tangency $(0,  h)$, there are two antipodal possibilities --- providing two disjoint horocycles. In general, one has  $c < 0$; the other has $c > 0$.  At $c = 0$ the tangent is horizontal so one horocycle is anchored at $\infty$ and the other at $0$.  In this latter case $d = h/2$ then.

A straightforward computation yields the following

\begin{Lem}\label{horo cent}

Let $(c,d)$ be the euclidean center of the horocycle thru $(0, h)$ in direction $\Psi$. Then:

\begin{equation} \label{d eq}
d = 
\begin{cases}\frac{h}{1+ \cos{\Psi} }, &\text{if $c > 0$}\\[.05in]
h/2 \ \text{or N. A. as horizontal}, &\text{if $c=0$}\\[.05in]
\frac{h}{1- \cos{\Psi} }, &\text{if $c < 0$}
\end{cases}
\end{equation}

Using equation (\ref{slope eq}) we find:

\begin{equation}
c = (h-d) \tan (\Psi)
\end{equation}

Thus:
\begin{equation} \label{c eq}
c = 
\begin{cases}  h \frac{\sin{\Psi}}{\cos{\Psi} + 1 }, &\text{if $c > 0$}\\[.05in]
0, &\text{if  $c = 0$}\\[.05in]
 h \frac{\sin{\Psi}}{\cos{\Psi} -1 }, &\text{if $c < 0$}
\end{cases}
\end{equation}

\end{Lem}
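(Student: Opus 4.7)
The plan is to treat this as a direct algebraic consequence of the two defining equations (\ref{horocycle eq}) and (\ref{slope eq}) that the authors have already derived from the geometric setup. The work amounts to solving a $2\times 2$ system in $(c,d)$ with parameters $(h,\Psi)$, keeping careful track of the sign choice that distinguishes the two antipodal horocycles tangent to $y=h$ at $(0,h)$.

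First I would use (\ref{slope eq}) to write $c=(h-d)\tan\Psi$ and substitute into (\ref{horocycle eq}), obtaining $(h-d)^2(\tan^2\Psi+1)=d^2$, i.e.\ $(h-d)^2=d^2\cos^2\Psi$. Taking square roots gives the two branches $h-d=\pm d\cos\Psi$, hence
\[
d=\frac{h}{1\pm\cos\Psi}.
\]
Feeding each branch back into $c=(h-d)\tan\Psi$ yields, after a line of simplification using $\tan\Psi=\sin\Psi/\cos\Psi$,
\[
c=\frac{h\sin\Psi}{1+\cos\Psi}\quad\text{or}\quad c=\frac{h\sin\Psi}{\cos\Psi-1},
\]
matching the two non-degenerate cases in (\ref{d eq}) and (\ref{c eq}).

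Next I would match each algebraic branch to the correct geometric sign of $c$. For $\Psi\in(0,\pi)$ the denominator $1+\cos\Psi>0$ while $\sin\Psi>0$, so the first branch gives $c>0$; the second branch has $\cos\Psi-1<0$, so it gives $c<0$. The same correspondence persists for $\Psi\in(\pi,2\pi)$ after accounting for the sign of $\sin\Psi$, since reversing tangent direction swaps which antipodal horocycle one selects. The degenerate case $c=0$ corresponds exactly to $\sin\Psi=0$, i.e.\ $\Psi=0$ or $\pi$: the tangent at $(0,h)$ is horizontal, so one of the two horocycles through $(0,h)$ is anchored at $0$ (giving $d=h/2$ by real-axis tangency) and the other degenerates to the horizontal line anchored at $\infty$ (not a finite Euclidean circle, accounting for the ``N.A.'' alternative in the statement).

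There is no real obstacle here; the only subtlety is bookkeeping the $\pm$ branch, which is resolved by the remark preceding the lemma that $(0,h)$ lies on exactly two horocycles distinguished precisely by the sign of $c$. The entire proof is therefore a short computation followed by a sign check, with the edge case handled by inspection of (\ref{slope eq}) when $\tan\Psi=0$.
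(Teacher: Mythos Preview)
Your proposal is correct and is exactly what the paper intends: the authors simply write ``A straightforward computation yields the following'' after recording equations (\ref{horocycle eq}) and (\ref{slope eq}), and your substitution $c=(h-d)\tan\Psi$ into (\ref{horocycle eq}) followed by the square-root branch analysis is precisely that computation spelled out. There is nothing further in the paper's argument to compare against.
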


One could map these horocycles to one with anchor at $\infty$ which  gives 

\[y = \frac{1}{2d}\]

This is too crude for our purposes --- it tells us only that there is a point on this horizontal horocycle that lifts to $ih$ and makes an angle $\Psi$ with $y=h$.  It doesn't tells which point (with $0 \leq x < 1$) it is.

\subsection{Targeting Points in the Unit Tangent Bundle.} \label{param} 


This subsection starts with a point in  the unit tangent bundle of a closed horocycle and produces another such point of the horocycle making a  given angle the horizontal. This is essential to the algorithms offered  in sections \ref{different} and \ref{ruling}.

We want to establish that the horocyclic flow recurs at any given angle at $\Im{z} \geq 1 $.  This means that starting on any horocycle through such a point we must show that we return to an arbitrarily close point in an arbitrary direction.   As in section \ref{target} we  parametrize our initial conditions as $(h, \Psi)$, meaning our point is $(0, h)$ and the horocycle tangent there has slope $\tan{\Psi}$.

\begin{figure}[h] \label{Fi:parameters}
\centering \includegraphics[scale=.5]{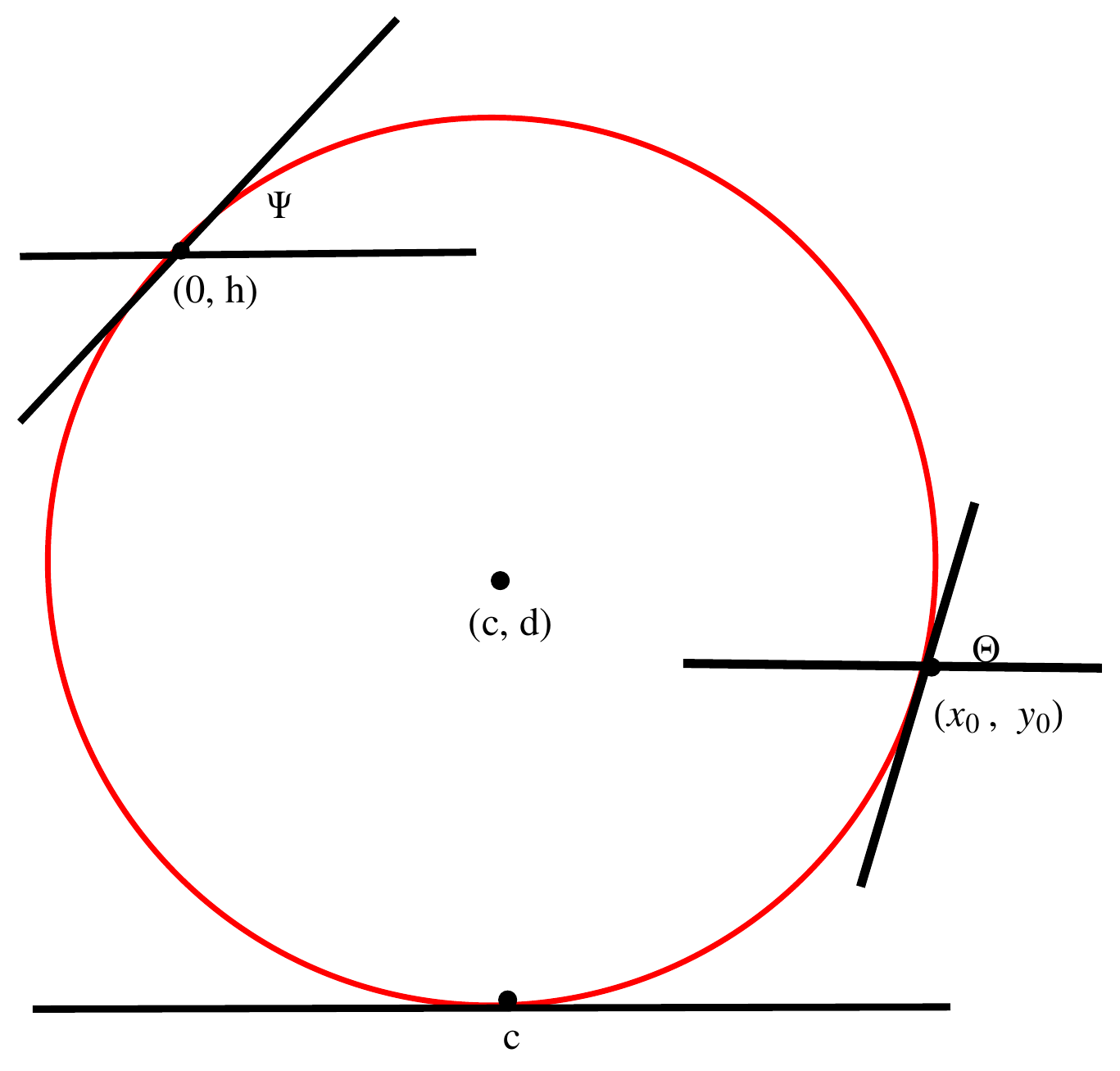}
\caption{ The $(h, \Psi)$ Parameters. Here $h>d.$}
\end{figure}

Note that $c$ and $d$ are determined in terms of $h$ and $\Psi$ by Lemma \ref{horo cent}. $\Theta$ is our target angle. We need  that $(x_0, y_0)$ is near a \modu \ image of $z = (0,h)$. As we've said, to expect that the horocycle contains $z$-image at which the horocycle makes an angle $\Theta$ with the horizontal is not generally possible.  These are horocycles are closed orbits.  However, the  flow is ergodic on the unit tangent bundle, so there is a nearby orbit --- nearby to $(h ,\Psi)$ --- that would have such a \modu \ image.

Using the notation of  Figure 3, we give explicit formuli for two such $(x_0, y_0)$  first in terms of $(c,d)$ and then in terms of $\Psi$:

\begin{Lem}\label{target pt} Consider the horocycle passing thru $(0, h)$ at angle $\Psi$.  Then the two points on this horocycle where an angle of $\Theta$ is made with the horizontal  are: 

\begin{gather}
x_0 = c \mp d \sin{\Theta}  \label{x_0 equation}\\
y_0 = d \pm d \cos{\Theta}  \label{y_0 equation} 
\end{gather}

Replacing $(c,d)$ dependence  with $\Psi$ dependence  gives (in the case of interest, $y_0$ small) :

 \begin{gather}
x_0 = \frac{2 h( \sin (\Psi)+ \sin (\Theta))+\cos (\Psi)+1}{2
   (\cos (\Psi)+1)}  \label{newx_0 equation}\\
y_0 = \frac{h(1- \cos (\Theta))}{\cos (\Psi)+1} \label{newy_0 equation} 
\end{gather}

\end{Lem}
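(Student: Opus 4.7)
The plan is to identify the horocycle through $(0,h)$ with tangent slope $\tan\Psi$ there as the Euclidean circle of radius $d$ centered at $(c,d)$, where Lemma \ref{horo cent} already gives $c$ and $d$ explicitly in terms of $h$ and $\Psi$. Everything then reduces to locating the two points on that circle where the tangent makes angle $\Theta$ with horizontal, which we do by parametrizing the circle according to that angle directly.

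For the first step I would write the circle as
\[
(x(\phi),\, y(\phi)) \;=\; (c + d\sin\phi,\; d - d\cos\phi), \qquad \phi\in[0,2\pi),
\]
so that $\phi=0$ corresponds to the tangent point $(c,0)$ on the real axis and the traversal is counterclockwise. The velocity vector is $(d\cos\phi,\,d\sin\phi)$, so the tangent line at parameter $\phi$ has slope $\tan\phi$. Demanding that this slope equal $\tan\Theta$ yields $\phi \equiv \Theta \pmod{\pi}$, producing exactly the two points
\[
(c + d\sin\Theta,\; d - d\cos\Theta) \quad\text{and}\quad (c - d\sin\Theta,\; d + d\cos\Theta),
\]
which is precisely formulas (\ref{x_0 equation})–(\ref{y_0 equation}) under the stated $\mp$/$\pm$ convention.

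For the second step I would substitute the $c>0$ branch of Lemma \ref{horo cent}, $d = h/(1+\cos\Psi)$ and $c = h\sin\Psi/(1+\cos\Psi)$, and select the sign giving $y_0 = d(1-\cos\Theta)$, which is the small-$y_0$ case and corresponds to $x_0 = c + d\sin\Theta$. Placing everything over the common denominator $1+\cos\Psi$ and simplifying with elementary trigonometry delivers (\ref{newy_0 equation}) at once and the main term $h(\sin\Psi+\sin\Theta)/(1+\cos\Psi)$ of (\ref{newx_0 equation}).

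The only real obstacle is bookkeeping of branch choices. Lemma \ref{horo cent} already distinguishes the two horocycles through $(0,h)$ of slope $\tan\Psi$ (namely $c>0$ versus $c<0$), and each such horocycle contains two points where the tangent has slope $\tan\Theta$. One must reconcile these four possibilities with the single pair of $\mp$/$\pm$ formulas stated in the lemma; the qualifier ``case of interest, $y_0$ small'' is precisely what singles out one branch. Beyond this signs-and-cases attention, the argument is pure elementary geometry of a Euclidean circle tangent to the real axis.
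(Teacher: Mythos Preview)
Your approach is correct and is exactly the direct computation the paper leaves implicit: the paper offers no proof beyond ``Using the notation of Figure~3, we give explicit formuli\ldots,'' so your parametrization of the tangent Euclidean circle and reading off the two tangent-angle-$\Theta$ points is precisely what is intended. Equations (\ref{x_0 equation}), (\ref{y_0 equation}), and (\ref{newy_0 equation}) follow exactly as you describe.

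One remark on (\ref{newx_0 equation}): you correctly note that substituting the $c>0$ branch of Lemma~\ref{horo cent} into $x_0=c+d\sin\Theta$ yields
\[
x_0=\frac{h(\sin\Psi+\sin\Theta)}{1+\cos\Psi}=\frac{2h(\sin\Psi+\sin\Theta)}{2(1+\cos\Psi)},
\]
whereas the displayed formula in the paper carries an extra additive $\tfrac{\cos\Psi+1}{2(\cos\Psi+1)}=\tfrac12$. That extra $\tfrac12$ does not arise from the first half of the lemma together with Lemma~\ref{horo cent}; it appears to be a slip in the paper (or an unannounced shift of base point), not a gap in your argument. Your derivation of the ``main term'' is the one consistent with (\ref{x_0 equation})--(\ref{y_0 equation}).
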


 The issue is to ensure that there is such an image of $ih$ near $x_0 + iy_0$.   In the following two sections we do just that. The first approach in section \ref{different} seems more accurate --more precise output for fixed input. It involves the presence of $i$ on the boundary of SFR.  However,  it is less directly related to our parameters than the method of following section \ref{ruling}.
 
 Before we set out we note that we have not used \modu \ at all --- apart from fact that $ih$ is in the SFR provided that $h \geq 1$.  As Kurt Mahler noted, to make a hamburger you have to put in some meat. 
 
 In the following section we begin base point $i$.  This is only to simplify the resulting expressions. Later in the section we will indicate what (little) changes are necessary to pass to $ih$, $h\geq 1$ and then to the rest of the SFR.
 
 The goal of the next two sections is to offer two algorithms the Main Theorem.
   
 \section{First Algorithm: Transitivity point-wise.}\label{different}
 \subsection{Beginning with $i$}\label{base i}
 
  At the outset we use base point $i$ rather than any $ih$ in the SFR. This is only to simplify and clarify  the algorithm.  The extension to general $h \geq 1$ appears in subsection \ref{baseih}. 
  
  Start with the SFR and the horocycle $y=1$.  This passes through $i$ of course, making an angle of zero with unit circle $\mathcal{U}$, which contains the sides of the SFR paired by $S$.  It is easy to keep track the group actions on $\mathcal{U}$.   By conformality, the slope at $A(i)$ of $A(\mathcal{U})$ is the same as the slope at $A(i)$ of the $A(I_1)$. And since we are interested in the angle produced by $A(I_1)$ with an appropriate $I_{1/n}$, this is just what we want.
  
\emph{(Caution: In the remainder of this section $c$ and $d$ are matrix entries, not the coordinates of the center of some horocycle.)}

  Let $A = \mat{a}{b}{c}{d}$. then $A(i) = \frac{(bd + ac) + i}{c^2 + d^2}$, which means $n$ is the sum of two relatively prime squares.  That is equivalent to $n$ being square free with no prime factors congruent to $3$ modulo $4$.  (This is most of the content of Lemma \ref{lowi} below.)

We wish the examine the angles at which the horocycles $I_{1/n}$ hit images of $i \in A(I_1)$ . 
Let  $A(\mathcal{U}) = \mathcal{B}$. Then $\mathcal{B}$  connects $A(-1)$ to $A(1)$. That is, $\mathcal{B}$ is  an h-line of euclidean radius $\frac{1}{|d^2 - c^2|}$ and center $\frac{bd-ac}{d^2 - c^2}.$ The real point of the boundary which is the image under $A$ of SFR is $A(\infty) = a/c$. Now $A(i)$ is on $\mathcal{B}$, and $I_{1/n}$ hits $A(i)$, where $n = c^2 + d^2.$  This incidence cannot be at the apex as the denominators don't match.  (The apex might be another \modu \ image of $i$,  however.) Therefore the angle produced at this incidence is not zero.  Since we have the $\mathcal{B}$'s  center and radius and the  $x$-coordinate of $A(i),$ it is a simple matter to get the slope. This is:

\begin{equation} \label{different slope} 
\frac{\left(c^2+d^2\right) (b d-a
   c)}{d^2-c^2}-a c-b d = \frac{2cd}{c^2 - d^2}
 \end{equation}
 
 If we let $f(x) = (2x)/(1-x^2)$, then the last expression in (\ref{different slope}) is $f(d/c)$.  But $d/c = -A^{-1}(\infty)$.  For \modu \ the set of such $d/c$ is dense in $\mathbb{R}^+$.  This, together with the fact that $f$ maps $\mathbb{R}^+$ onto itself, establishes recurrence at least at $z = i$ --- we can get as close to any slope as we like.
 
 Next, we  consider the equation $f(x) = \tan{\Theta}$ and solve it for $x$, thinking of $x$ as $d/c$.  That gives
 
 \begin{equation} \label{d/c of theta}
 x = x(\Theta) =  d/c = -\cot{\Theta} \pm \csc{\Theta}
 \end{equation} 

Thus starting with a target of $\tan{\Theta}$, we expand $d/c$ in a continued fraction the RHS of equation (\ref{d/c of theta}).  The $n$'s we want are of the form $n = n_k = p_k^2 + q_k^2$, i.e. $p_k = c, \, q_k = d$.  The larger $k$ is, the closer we are to $(A_{n_k}(i), \Theta)$ in the unit tangent bundle. Specifically, we have here that the $z$ entry is a \modu-image of $i$ (so that coordinate is exact), and the angle will satisfy $|X_\Theta - p_k/q_k| \leq 1/(2q^2_k)$ by standard continued fraction bounds found in  \cite{aK}, p. 36.  

If the nearness to $\Theta$ itself is desired, note that $\Theta = \arctan{f(X_\Theta)}$ and upper and lower bounds for the effect of $f$ and $\arctan{x}$ are at hand.

Though the arriving at  the algorithm is somewhat involved, the algorithm itself is simply rendered. For simplicity, we state it for $h = 1$, i.e., point $i \in $ SFR. The passage to $h \geq 1$ is treated next in subsection \ref{baseih}.

\begin{Alg}\label{Alg1}
The goal is to find a  \modu \ element A and a positive integer $n$ such that $A(I_1)$ makes  an angle nearly $\Theta$ with $I_{1/n}$ at $A(i)$.   
\begin{itemize}

\item Input: $\Theta \in [0, 2\pi)$, the target angle and $\eps$, the error we will accept for the auxiliary parameter $X_\Theta$, which is an acceptable proxy for $\Theta$ or the slope $\tan{\Theta}$. 

\item  Step One:  Evaluate $K =  -\cot{\Theta} + \csc{\Theta}$.

\item Step  Two: Find the continued fraction of $K = [a_0; a_1, a_2, \ldots ] = $ CF(K).

\item Step Three: Choose $k$ such that $1/(2q_k^2) < \eps$. (This gives  the error in the approximation of $X_\Theta$, which as we've observed is equivalent to bounding the error in $\Theta$ or of the slope $\tan{\Theta}$. 

\item Output: $n = p_k^2 + q_k^2$, 
\[ A = \mat{p_{k-1}}{(-1)^k q_{k-1}}{ p_k}{(-1)^k q_k} \]

\end{itemize}

\end{Alg}

Here is an explicit example.  Take $\Theta = 33^{\circ}$ and $\eps = .000001$ .  Then $\tan{\Theta} = 0.6494075932 \ldots$ and $- \cot{\Theta} + \csc{\Theta} =  .2962134950 \ldots.$  

The continued fraction of .2962134950 is (exactly) \newline $[0; 3,2,1,1,1,15,1,14,1,7,1,1,1,3,1,20,2,1]$.  Here are  values of two consecutive  convergents: $[0; 3, 2, 1, 1, 1, 15, 1] = 133/449$ and  $[0; 3, 2, 1, 1, 1, 15, 1,14] = 1987/6708.$

Therefore the \modu \ matrix $A$ is 

\[ A = \mat{133}{449}{1987}{6708}. \]

(Aside: A choice has been made here; the choice we made in Algorithm \ref{Alg1}. We could have used the matrix 

 \[ \mat{133}{-449}{-1987}{6708} \]
 
  This would replace $x$ by $-x$ just below, while leaving the denominators of $x$ and $y$ unchanged. It would reverse the sign of the slope in equation (\ref{different slope}) and would reverse the sign of the real center (denoted $C$) of the axis of $A$, whose formula was $\frac{bd-ac}{d^2 - c^2}$.  This will be relevant below where the term $C-x$ appears would have its sign reversed if one makes this other choice.)

Also,

\begin{equation} A(i) := x + iy =   \frac{3276163}{48945433}+\frac{1}{48945433} i.
 \label{A(i)}
 \end{equation}

The real center $C$ of the h-line between $A(1)$ and $A(-1)$ is  $2747621/41049095.$ 
The slope of this h-line at $A(i)$  is $(C-x)/y.$

Substituting this $C$ and $x, y$ from equation (\ref{A(i)}) gives a slope of $-0.64940754$.  
Recall that the $\tan{33}^{\circ} = 0.64940759.$  We have agreement to 7 decimal places, modulo the sign.  We resolve this  by making the other choice alluded to in the Aside just above.  That reverses the sign of the slope, which is within $\eps$ of $\tan{33}^{\circ}$.

Finally, note that we have found $I_{1/48945433}$ is the horocycle with the approximating angle/image and this has height $2.043\ldots \times 10^{-8}$.

\subsection{Passing to $ih, h > 1$.}\label{baseih}

Here are the requisite changes this generalization requires:

\begin{itemize}

\item First, 

\[ A(ih) = \frac{(bd + ach^2) + i}{h^2c^2 + d^2}\].

\item Second, the center of $\mathcal{B}$, the h-line connecting $A(-h)$ with $A(h)$, is 
\[ \frac{A(-h) +A(h)}{2 } = \frac{bd - ach^2}{d^2 - c^2h^2}. \]

\item Third, the radius of $\mathcal{B}$ is 

\[ |\frac{A(-h) -A(h)}{2 }| = \frac{h}{|d^2 - c^2h^2|}. \]

\item Fourth, the slope of $\mathcal{B}$ at $x=  \frac{(bd + ach^2)}{h^2c^2 + d^2}$ is

\[ \frac{2hcd}{c^2h^2 - d^2}. \]

\item Fifth, the auxiliary function is now 

\[ f(x) = \frac{2hx}{h^2 - x^2} \]

where, \textbf{and this is critical}, $x = d/c$ as before. Continue with a specific $h$ and target $\Theta$ chosen and once again solve  
\[f(x) = \tan{\Theta} \]
Then expand the solution as a continued fraction as before, getting $p_k = c, \, q_k = d$ again, with $k$ controlling the accuracy to the target angle.  

\item Finally, The $n$ involved in $I_{1/n}$ is now the reciprocal of the height of $A(ih) = \frac{d^2 +c^2h^2}{h}$.  True it need not be an integer, but $I_\alp$ makes fine sense for any $\alp > 0$.

\end{itemize}

We also remark that there is no obstruction to running this algorithm starting at any base point in the SFR.  If this point were $z := x + ih$, we calculate $A(z), A(x-h)$ and $A(x+h)$, the latter two defining $\mathcal{B}$ and proceed as above.  The formulae are more involved, but the method works.

\section{Second Algorithm: Ruling the Horocycles}\label{ruling} We use the notation of section \ref{different}. 
.

 We want to examine the horocyclic flow in a small open set $\mathcal{H}$.    The motivation here is that we know the $A$-images of (the complete extensions of) those sides --- horizontal horocycles and vertical  h-lines. In effect we've placed rectangular (euclidean) $\mathcal{H}$

Let  $\mathcal{T}$ be the he set $\{z = x + iy\: | \: x \in (0,1/2], |z| > 1\}$. It is convenient for  this algorithm to restrict $z$ to the union of  $\mathcal{T}$, $R(\mathcal{T})$, where $R : z \to -\bar{z} +2$, and the portion of  the h-line $(x-1)^2 + y^2 = 1$ with real part in $(1/2, 1]$.  This  is a fundamental region for \modu, of course.  We shall denote it $\mathcal{F}^*$.  Every point in  $\mathcal{F}^*$ has real part in $(0,1]$ and absolute value greater than 1.

   Denote the SW rectangle corner $z$ and the NE corner $w$.  The the slope of the euclidean line between $A(z)$ and $A(w)$ is 

\[ \frac{\Im{A(z)} - \Im{A(w)}}{\Re{A(z)} - \Re{A(w)}} = \frac{\Im{(A(z) - A(w))}}{\Re{(A(z) - A(w))}}.\]

Next, note that 

\[ A(z) - A(w) = \frac{z-w}{(cz + d)(cw + d)}. \]

We're interested in the ratio of the imaginary to the real part of $A(z) - A(w)$.  This ratio is unaffected by multiplication by real functions. We exploit this as follows.

Note first that 

\[ \frac{ A(z) - A(w)}{z-w} \]

is nearly the derivative of (any) point inside the little box. That is, if our box has vertices

\[ z := x + iy; \,\,\,\ w := z(1 + \eps) \]  

we get, employing the derivative at $z$,  that

\begin{multline}\label{get real} 
 A(z) - A(w) \approx  (z-w) A'(z) =  \\ \frac{\eps z}{(cz+d)^2} = 
 \frac{\eps}{|cz + d|^4} (x(c^2 |z|^2 + d^2) + 2cd|z|^2 + i(d^2y -|z|^2c^2 y)
 \end{multline}

Note that the last fraction in (\ref{get real})  is real so that the ratio we seek is unaffected by it.  In fact, by taking $\eps = \eps(A, z)$ as small as we like, we can make the derivative approximation of $\Delta A/ \Delta z$ as close as we like.  That said, employing the derivative introduces a sort of error not present in Algorithm \ref{Alg1}. In the sequel, we may not  more partial denominators to get the same accuracy. 

Our ratio is then

\begin{equation}\label{Fdef}
 \frac{y -X^2y(x^2  + y^2)}{X^2 x\left(x^2+ y^2\right)+2X \left( x^2+
   y^2\right)+x} = F(\cdot, \cdot, X) = F(X)
   \end{equation}

\noindent with $X := c/d$ in the last.

(\ref{Fdef})  is a function of $X$ as $z = x+iy$ was chosen at the outset, subject only to the condition that $z \in \mathcal{F}^*$.   This condition implies  that the numerator and denominator of $F$ each have two real zeroes that are interspersed, like those of $(1-X^2)/(X^2 + 2X)$.    This interspersion forces the range of $F$  to be all of  $\mathbb{R}$.   Recall that $c/d = -1/A^{-1}(\infty)$ and that that set is dense in $\mathbb{R}$.

To conclude transitivity, take a point $z = z_0 := x_0 + iy_0 \in \mathcal{F}^*$. Take an  $\eps$-square box with SW corner  $z_0$ and a target angle $\Theta$.  Find $X_\Theta$ such that $F(x_0, y_0, X_\Theta) = \tan{\Theta}$.

Note a difference here from Algorithm \ref{Alg1}:  We haven't given a closed from expression for $X_\Theta$ like equation \ref{d/c of theta}. Nonetheless inverting $F$ is not difficult it is the quotient of two real quadratic polynomials.  This inversion permits us to go from $X_\Theta$ to $\tan{\Theta}$, from which we may pass to  $\Theta$ itself.

Last, find a $B \in \modu$ with entries satisfying $c/d$ near $X_\Theta$.  Then the action of $B$ will tilt the SW - NE  diagonal of the of the $\eps$-box to an angle against the horizontal of nearly $\Theta$. 

As with Algorithm \ref{Alg1}, arriving at  this algorithm is somewhat involved. But again,  this algorithm may be  is simply rendered.

\begin{Alg}\label{Alg2}
Starting with $z_0$ in the SFR and a target angle $\Theta$, the goal  is the find a $B \in  \modu$ which tilts the euclidean line between $z$ and $z(1+ \eps)$ so that the slope is within $\delta$  of $\tan{\Theta}$.

\begin{itemize}

\item Inputs: $z_0, \eps, \delta, \Theta$.

\item Step One: Find $X_\Theta$ such that $F(x_0, y_0, X_\Theta) = \tan{\Theta}$.

\item Step Two:  Take 

\[ k = \lceil - \frac{\log{\del}}{\log{2}} \rceil \]

 and find the first $k$ partial denominators in the continued fraction expansion of $X_\Theta.$ 

\item Output:

\[ B = \mat{p_{k-1}}{(-1)^k q_{k-1}}{ p_k}{(-1)^k q_k} \]

The choice of $k$ ensures that 

\begin{equation} \label{Alg2 bound}  | X_\Theta - p_k/q_k | < 1/(2q_k^2) \leq 1/2^k \leq \del .
\end{equation}

As indicated above, (\ref{Alg2 bound})  is equivalent to a similar bound on the difference of the slope of the euclidean line between $B(z_0)$ and $B((1+ \eps)z_0)$ and  $\tan{\Theta}$, or the angle said line makes with the horizontal and $\Theta$.

\end{itemize}

\end{Alg}

Here is an example:  Take $\Theta = 33^{\circ},    \, z_0 = 1 + i  \text{ and } \eps = \delta =  .0001.$  In subsesction \ref{base i} we  had $\tan{33}^{\circ} =  0.6494075932\ldots.$ Solving $F[1,1, X] = 0.6494075932$ gives $X = .1174485783$.  Now $CF[.1174485783] = [0; 8, 1, 1, 16, 1, 9, 2, 2, 1, 4, 1, 2, 1, 4, 1, 3]$.

Two consecutive convergents are $[0; 8, 1, 1, 16, 1, 9, 2] = 731/6224$ and $[0; 8, 1, 1, 16, 1, 9, 2, 2] = 1810/15411.$ 
This gives $B = \mat{731}{6224}{1810}{15411}$.

We have 

\begin{gather}
 B(1 + i) =  \frac{121095165}{299838941}+\frac{i}{299838941}\\
 B(1.0001 + 1.0001i) = 0.403867+ 3.335 \times 10^{-9}i \label{small}
 \end{gather}
 
 The ratio of the differences imaginary parts  and  real parts of the $B$-images is 
0.649359 which is within $\delta$ of   $\tan{33}^{\circ}  = 0.6494075932\ldots$ . And equation (\ref{small}) shows we are at height $3.335\ldots \times 10^{-9}$ when achieving this approximation.

\section{Homotopy classes of $I_\alp$, $\alp \geq 1/(2\sqrt{3})$.}\label{homo} 

In previous sections we have seen that taken as a set, the horocyclic paths $I_\alp, \: \alp >0$ can display recurrent behavior.  This naturally led us to study the homotopy classes of such curves. Here, two curves $f(t)$ and $g(t), \: t \in [0,1]$ on \modsurf \ are homotopic if there is  a continuous function $H$  from  $[0,1]^2$ to \modsurf \ with $H(t,0) = f(t)$ and $H(t,1) = g(t)$. 

The  main result of this section is  that as $\alp \downarrow 0$ the homotopy classes are stable --- do not change --- between encounters with elliptic fixed points.  Also, we determine the homotopy classes of $I_\alp$ for all $\alp$ in $ [1/(2\sqrt{3}), \infty).$  These classes become increasingly complicated as $\alp$ descends.

Not surprisingly we shall need to characterize the \modu \ elliptic fixed points at the outset.

\subsection{Some Lemmata: \modu \,  Images of $i$ and $\rho$} \label{lemmata}

Most but not all of the material in this subsection is in \cite{bS}, p.10-12.
We require the following computations.

Let $A \in \modu$ \, be $\mat{a}{b}{c}{d}$. We want to investigate  the images of $i$ and $\rho$ that lie on $I_{1/n}$.  First of all, we have

\begin{equation} \label{iimage}  A(i) = \frac{(ac + bd) + i}{c^2 + d^2} 
\end{equation} 

This means that $n = c^2 + d^2$, which is possible if and only if no prime congruent to 3 mod 4 appears in the factorization of $n$ (this is because $(c,d) = 1$).  Note also that given $n$, $c$ and $d$ are unique up to sign.  But as we shall see below, order matters.

What of $a$ and $b$?  We can use the euclidean algorithm to generate a minimal solution.  Call it again $a$ and $b$.  Then $a + mc$ and $b + md$ is also a solution (call it $A_m$) and every solution is easily shown the be of that form.  Here $m$ is any integer. 

Substituting this into equation (\ref{iimage}) gives $A_m(i) = A(i) + m$.  We have shown:

\begin{Lem} \label{lowi} The portion of $I_{1/n}$ with real part in $[0,1]$ contains an image of $i$ according as $n$ is a proper sum of two squares or not. There may be more than one image on such $I_{1/n}$.
\end{Lem}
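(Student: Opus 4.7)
The plan is to read both directions of the biconditional directly off equation (\ref{iimage}); the paragraph just before the lemma supplies essentially all the required machinery.

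For the forward direction, suppose $A(i) \in I_{1/n}$ for some $A = \mat{a}{b}{c}{d} \in \modu$. Matching imaginary parts in (\ref{iimage}) forces $c^2 + d^2 = n$, and the determinant relation $ad - bc = 1$ forces $\gcd(c,d) = 1$; thus $(c,d)$ is a primitive representation and $n$ is a proper sum of two squares.

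For the reverse direction, given $n = c^2 + d^2$ with $\gcd(c,d) = 1$, the Euclidean algorithm yields $(a,b)$ with $ad - bc = 1$, so $A \in \modu$ and $A(i)$ has imaginary part $1/n$ by (\ref{iimage}). The preceding text records that every Bezout solution lies in the family $(a+mc, b+md)$, $m \in \mathbb{Z}$, and that the corresponding matrix satisfies $A_m(i) = A(i) + m$. Choosing the unique $m$ that translates the real part into $[0,1)$ produces the image of $i$ on $I_{1/n}$ required by the lemma.

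For the multiplicity claim, each primitive representation attaches at least one such image, and essentially distinct primitive representations can produce distinct real parts on the segment. Already $n=5$ illustrates this via $(c,d)=(1,2)$ and $(c,d)=(2,1)$, whose associated Bezout matrices send $i$ to $(3+i)/5$ and $(2+i)/5$, respectively; more genuinely inequivalent pairs occur for composite $n$ such as $65 = 1+64 = 16+49$. I anticipate no real obstacle here: the argument is a bookkeeping exercise atop equation (\ref{iimage}) and the paragraph introducing the family $A_m$, with the only mild care being to verify that the $m$-shift lands in $[0,1)$ and that distinct primitive $(c,d)$ can yield numerators $ac+bd$ that are incongruent modulo $n$.
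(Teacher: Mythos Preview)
Your proposal is correct and follows essentially the same argument as the paper: both directions are read off equation~(\ref{iimage}) together with the $A_m$ family, and the multiplicity claim is illustrated by the very same $n=5$ example the paper uses (the paper cites $85$ rather than $65$ for a second example, but that is cosmetic). The only place you are marginally more careful than the paper is in explicitly noting the choice of $m$ that lands the real part in $[0,1)$.
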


The last statement is illustrated by the fact that both $\frac{2 + i}{5}$ and $\frac{3 + i}{5}$ are images of $i$ on $I_{1/5}$.  To wit, $\mat{1}{1}{1}{2} (i) = \frac{3 + i}{5}$ and  $\mat{1}{0}{2}{1} (i) = \frac{2 + i}{5}$.  There are more involved examples, with more than two  same-height $i$-images encountered in a (euclidean) length one segment of $I_{1/n}$. This can be realized exploiting the fact that $7^2 + 6^2 = 9^2 + 1^2 =85$. Last, it must also noted that $I_{1/5}$ arc between these two $1/5$-height images cannot be a fundamental arc for $I_{1/5}$.  If that were so, $z \mapsto z + 1/5$ would be in \modu.

Here is what happens with $\rho$:

\begin{equation} \label{rhoimage}  A(\rho) = \frac{2(ac + bd) + ad + bc + i\sqrt{3}}{2(c^2 + cd + d^2)} 
\end{equation}

\begin{Lem} The portion of $I_{1/n}$ with real part in $[0,1]$ contains an  image of $\rho$ according as whether or not $n$ is representable as $2(c^2 + cd + d^2), \, (c,d) =1$. There may be more than one image on such $I_{1/n}$.

\end{Lem}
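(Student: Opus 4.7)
The plan is to mirror the argument of Lemma~\ref{lowi} using equation~(\ref{rhoimage}) in place of (\ref{iimage}). The imaginary part of any $A(\rho)$ is $\sqrt{3}/(2(c^2+cd+d^2))$, so the horocycle-incidence condition reads off the representation $n = 2(c^2+cd+d^2)$ in the paper's notation; coprimality $(c,d)=1$ is automatic because the bottom row of every element of \modu\ is coprime.

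For the existence direction, given a coprime representation $n = 2(c^2+cd+d^2)$ I would invoke the extended Euclidean algorithm to produce integers $a,b$ with $ad-bc=1$, yielding an explicit $A\in\modu$ whose image $A(\rho)$ sits on the horocycle. Since replacing $A$ by $T^m A$ shifts $A(\rho)$ by $m$, exactly one representative in this left $T^{\mathbb Z}$-coset has real part in $[0,1)$. The converse follows immediately: if $A(\rho)\in I_{1/n}$ has real part in $[0,1]$, the imaginary part formula reads off $n$, and coprimality again follows from $\det A=1$.

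For the multiplicity claim (``may be more than one image''), I would analyze the stabilizer of $\rho$ in \modu, which --- in contrast to the $i$-case where it has order two, acting on bottom rows by $(c,d)\mapsto(-d,c)$ --- is cyclic of order three, generated in the paper's conventions by $ST^{-1} = \mat{0}{1}{-1}{1}$. A direct computation shows that right-multiplication by $ST^{-1}$ sends the bottom row $(c,d)$ to $(-d,c+d)$ and preserves $c^2+cd+d^2$. Hence each $\rho$-image on $I_{1/n}$ corresponds to a full $3$-cycle (up to overall sign) of coprime representations of $n/2$ by the form, and two distinct images occur precisely when the form admits two coprime representations lying in disjoint cycles. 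The smallest example is $n/2=7$: the orbits $\{\pm(1,2),\pm(-2,3),\pm(-3,1)\}$ and $\{\pm(2,1),\pm(-1,3),\pm(-3,2)\}$ are disjoint, producing the two distinct images $(9+i\sqrt{3})/14$ and $(5+i\sqrt{3})/14$ at the same height, coming from $\mat{1}{1}{1}{2}$ and $\mat{1}{0}{2}{1}$ respectively.

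The principal obstacle I anticipate is bookkeeping this $3$-to-$1$ stabilizer redundancy (rather than the $2$-to-$1$ redundancy of the $i$-case) when counting distinct same-height $\rho$-images, and verifying via explicit iteration of $(c,d)\mapsto(-d,c+d)$ that candidate coprime pairs lie in genuinely inequivalent orbits; the rest is routine matrix algebra and the form calculation $(-d)^2+(-d)(c+d)+(c+d)^2=c^2+cd+d^2$.
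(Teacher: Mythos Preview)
Your proposal is correct and follows the same route as the paper: read off the imaginary-part condition $n=2(c^2+cd+d^2)$ from equation~(\ref{rhoimage}), note $(c,d)=1$ from $\det A=1$, and exhibit the pair $(9+i\sqrt3)/14$, $(5+i\sqrt3)/14$ via $\mat{1}{1}{1}{2}$ and $\mat{1}{0}{2}{1}$ for the multiplicity claim. The paper's own argument is in fact terser than yours---it simply states the example and remarks that ``the identical fundamental arc argument applies''---so your stabilizer analysis (the $3$-cycle $(c,d)\mapsto(-d,c+d)$ under right multiplication by $ST^{-1}$) supplies more detail than the paper gives, but it is the natural elaboration and not a different method.
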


As with Lemma ~\ref{lowi} the last statement is illustrated by the fact that both $\frac{9 + i\sqrt{3}}{14}$ and $\frac{5 + i\sqrt{3}}{14}$ are images of $\rho$ on $I_{1/14}$.  To wit, $\mat{1}{1}{1}{2} (\rho) = \frac{9 + i\sqrt{3}}{14} $ and  $\mat{1}{0}{2}{1} (\rho) =  \frac{5 + i\sqrt{3}}{14}$. The identical fundamental arc argument applies as well --- there may be many more $\rho$ images on $I_{1/n}$.

According to Leveque,\cite{wL} V. 2, Th 1-5, p.19, if there is a representation $n  = c^2 + cd + d^2$, then there is an $m$ with $0 \leq m < 2n$ and $4n | (m^2 + 3)$.  This is since 3 is the discriminant of  $ c^2 + cd + d^2$.  This means $m$ must be odd and writing $m= 2k+1$ we arrive at $n | k^2 + k + 1$.

\subsection{Closed Geodesics versus Closed Horocycles.}

Before giving our results, we foreground some differences between the homotopy classes for closed horocycles with such classes for closed geodesics.

\begin{itemize}

\item   First, there is but one geodesic in each non-trivial homotopy class --- lending rigidity to that study. For closed horocycles there are also, in the main, a continuum of curves in each homotopy class, however none is in any way distinguished.  

\item  Second, there need not be a geodesic in said homotopy class. (The curve collapses when pulled tight.) 

\item  Third, as we saw in section \ref{Versus},  the side pairings product has little geometric significance. In the geodesic case, this product gives the primitive hyperbolic whose fixed h-line projects to the geodesic. 

\item Fourth, as $I_\alp$ descends toward the real axis, it encounters efps.  These encounters separate homotopy classes --- that is, the set $\{z \in I_\alp | \, 0< \alp_0 < \alp < \alp_1< \infty \}$ is free of efps, if and only if its horocycles all have the same homotopy class. What's going on here is that the (accessible, non-cusp) vertices of SFR are efps.  Thus the $I_\alp$ which do not encounter an efp cannot change homotopy class as the lifts change continuously. When a lift arrives at an efp, there is an obvious change of homotopy class (to account for the incidence.)

\end{itemize}

\subsection{Closed Horocycle Homotopy Classes}\label{classes}

Consider all the $I_\alp$, where $\alp \in   (0, \infty)$.   In this section we examine the homotopy classes of the lifts of these closed curves to \modsurf \ for positive $\alp \geq 1/(2\sqrt{3})$. 

Our results are summarized in Figure 4 following the Bibliography,  which the reader will find helpful in reading the text of this section.  This color figure has 3 columns: 

\begin{itemize}

\item The first gives a range for $\alp$,  covering in total the interval $[1/(2\sqrt{3}), \infty]$. 

\item The second gives the lift of $I_\alp$ to \modsurf, which is a sphere with location indicated --- in the component of $\modsurf - \pi(I_\alp)$ or on $\pi(I_\alp)$ --- for the special points $i$, the efp2; $\rho$, the efp3; and $\times$, the cusp.   

\item The third gives the corresponding arc ensemble in the AFR, \emph{the alternate fundamental region} for \modu. AFR consists of  two half pieces of SFR translates lying between real parts 0 and 1.  Thus $\rho$ is the lowest point in the middle.  the side pairings are $T$ and $S T^{-1} : z \mapsto -1/(z-1)$ which takes $1 + i$ to $i$, fixes $\rho$ and rotates counter-clockwise by $2\pi/3$. 

The sequence (numbered) and direction (arrows)  as we enter and leave the AFR.  In some cases the pairing transformations in \modu \ are give as well as the entry/leaving points on the boundary of the AFR. Similarly, the identically colored regions are images of each other using the projection of \modsurf \ to $\mathcal{H}$.

\end{itemize}

 As we descend from $\infty$ to $I_1$, the homotopy class of the horocycle doesn't change:  it is a loop about $\infty$ with nothing else in component of the loop containing $\infty$.  (Obviously there is no geodesic in this homotopy class.) 
The fact that there is no change in the homotopy class as we descend to 1 may be seen directly.  However, and this will apply more generally as we descend without encountering efps, constructing a homotopy between two nearby closed horocycle paths is simply done using the small euclidean rectangle having said paths as horizontal edges.  The interior of this rectangle is contractable to either horocycle path in \modsurf.

When we hit $I_1$ something different happens:  we have a loop about $\infty$ beginning and ending at $i$ with a zero angle.

When we descend past $I_1$ we get a loop about $\infty$, which continues and ends  with an interior loop about $\rho$, interior meaning contained in  the component with $\infty$ in the previously produced loop. This homotopy class continues until we reach $\alp = \sqrt{3}/2$ where we hit $\rho$ (the interior loop is collapsing).   That gives us a loop about $\infty$, now ending at $\rho$ and with a (non-zero) angle of $\pi/3$.  At this juncture it  makes sense to look not at the standard SFR, but rather the two `halves' lying between real parts 0 and 1.  Thus $\rho$ is the lowest point in the middle.  the side pairings are $T$ and $S T^{-1} : z \mapsto -1/(z-1)$ which takes $1 + i$ to $i$, fixes $\rho$ and rotates counter-clockwise by $2\pi/3$. Let us denote this region AFR, for alternate fundamental region.

Upon descending from $\rho$ the curve is a figure 8, the intersection point is an ordinary one on the surface; the ``top" component contains $\infty$ while the bottom contains $\rho$.  The efp2 $i$ is in the exterior. 

Continuing down towards the real axis, we next get to $I_{1/2}$ which contains a \modu \ image of $i$, namely $(1 + i)/2 = S(-1 + i)$. Since here is no image of $\rho$ or $i$ at an intermediate height\footnote{Indeed the next lowest image if $\rho$ has imaginary part  is $1/(2\sqrt{3}) = 0.288675\ldots$; while the next lowest image of $i$ is at .2.} we get the next homotopy class change here:  the lower component of the figure 8 now has $i$ on its boundary.  

Proceeding down from $I_{1/2}$ yields a curve created from a figure 8 as follows: Place $i$ in the upper component.  Then put and internal loop in the lower component with the cusp inside the internal loop; $\rho$ is external to the whole figure, which has two self-intersections. This is homotopy class for $1/(2\sqrt{3}) \alp < 1/2$.  Of course $I_{1/(2\sqrt{3})}$ simply has $\rho$ on the boundary of the upper loop --- the one containing $i$ --- which is expanding toward $\rho$ as $\alp$ decreases.   

There is no impediment to continuing to decrease $\alp$ in this manner, though the computational aspects are more involve as we encounter further efps.  Since we have at this time no compelling argument for detailed study of these curves --- the pattern having been discerned already --- we stop at $I_{1/(2\sqrt{3})}$.

A word should be offered about the case of encountering two or more efps at  the same moment during the descent of $I_\alp$.  This means the curve encounters the same efp in its complete excursion more than once; it has a  self-intersection there.

Following the bibliography we offer a table giving the first eight homotopy classes together with their lifts to AFR and lifting transformations.  The most interesting of these is the class $\sqrt{3}/2 < \alp < 1 $.  The unusual feature being that, after removal of a simple horocyclic loop, two disjoint  disc components of \modsurf \  remain;  $\rho$ and $\infty$ are in one and $i$ is in the other.  

Here is a lemma useful in verifying the various lifts depicted in the Figure 4.  Its relevance stems from the fact that the $I_\alp$ don't get ``close" to the cusp (high in SFR).

\begin{Lem} Let  $J_\alp := ST^{-1}S(I_\alp)$ has the same euclidean radius as $S(I_\alp)$. Therefore, likewise for $K_\alp := TST^{-1}S(I_\alp) = T(J_\alp)$. The anchors of $J_\alp$ and $K_ \alp$ are $1$ and $2$, resp.
\end{Lem}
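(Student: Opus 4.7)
The plan is a direct three-step Euclidean computation, tracking how each of the factors $S$, $T^{-1}$, $S$ acts on the anchor and Euclidean radius of a horocycle. The one tool I would extract first, as a brief sub-lemma, is the general rule that $S : z \mapsto -1/z$ sends a horocycle anchored at $p \neq 0$ with Euclidean radius $r$ to a horocycle anchored at $-1/p$ with Euclidean radius $r/p^2$. This is proved once and for all by substituting $z = -1/w$ into $(x-p)^2 + (y-r)^2 = r^2$, writing $w = u+iv$, and completing the square in $u$ and $v$ to see that the resulting locus is $(u + 1/p)^2 + (v - r/p^2)^2 = (r/p^2)^2$.

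With the sub-lemma in hand I first compute $S(I_\alp)$. Since $I_\alp$ is anchored at $\infty$ and $S(\infty) = 0$, the image is a horocycle anchored at $0$. Parametrizing $I_\alp$ by $t + i\alp$ and evaluating $S(t + i\alp) = (-t + i\alp)/(t^2 + \alp^2)$, one checks directly that the image points satisfy $u^2 + v^2 = v/\alp$, so $S(I_\alp)$ is the circle of Euclidean radius $1/(2\alp)$ centered at $i/(2\alp)$. Next, $T^{-1}S(I_\alp)$ is just the Euclidean translate of $S(I_\alp)$ by $-1$, hence the horocycle of radius $1/(2\alp)$ anchored at $-1$. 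Applying the sub-lemma with $p = -1$ and $r = 1/(2\alp)$ then yields that $J_\alp = ST^{-1}S(I_\alp)$ is anchored at $-1/(-1) = 1$ with Euclidean radius $\bigl(1/(2\alp)\bigr)/(-1)^2 = 1/(2\alp)$, which is exactly the radius of $S(I_\alp)$. That settles the first assertion of the lemma.

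Finally, $K_\alp = T(J_\alp)$ is immediate: since $T : z \mapsto z + 1$ is a Euclidean translation it preserves Euclidean radii and shifts the anchor by $+1$, so $K_\alp$ has radius $1/(2\alp)$ and anchor $1 + 1 = 2$. No step is genuinely hard here; the only calculation that deserves real care is the sub-lemma on how $S$ acts on horocycles anchored at finite points, where the scaling $r \mapsto r/p^2$ must be tracked without error. Everything else is either a Euclidean translation by $T^{\pm 1}$ (which is transparent) or a direct application of that sub-lemma.
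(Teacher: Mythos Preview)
Your argument is correct. You decompose $ST^{-1}S$ into its three factors and track the anchor and Euclidean radius of the horocycle through each step, invoking a small sub-lemma on how $S$ rescales the radius of a horocycle anchored at a finite point.

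The paper takes a shorter route: it simply multiplies out the matrix product to get $ST^{-1}S = \left(\begin{smallmatrix}1&0\\1&1\end{smallmatrix}\right)$ (in $PSL(2,\mathbb{Z})$), reads the anchor of $J_\alp$ as the image of $\infty$, namely $a/c = 1$, and observes that the radius equality follows directly from this matrix form. Implicit here is the standard fact that $\left(\begin{smallmatrix}a&b\\c&d\end{smallmatrix}\right)$ sends the horizontal horocycle $I_\alp$ to a circle of Euclidean radius $1/(2c^2\alp)$; since both $S$ and $ST^{-1}S$ have $|c|=1$, the radii coincide. The paper's approach is more economical and avoids your sub-lemma entirely, while yours makes every step visible and would generalize more readily to compositions whose matrix product is not so clean.
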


$ST^{-1}S = \mat{1}{0}{1}{1}$. Thus $J_\alp$ has anchor at 1 and $K_\alp$ has anchor 2.  The radial equalities also follow from this matrix expression.

\begin{figure} \label{Fi:newhoros}
\centering \includegraphics[scale=.24]{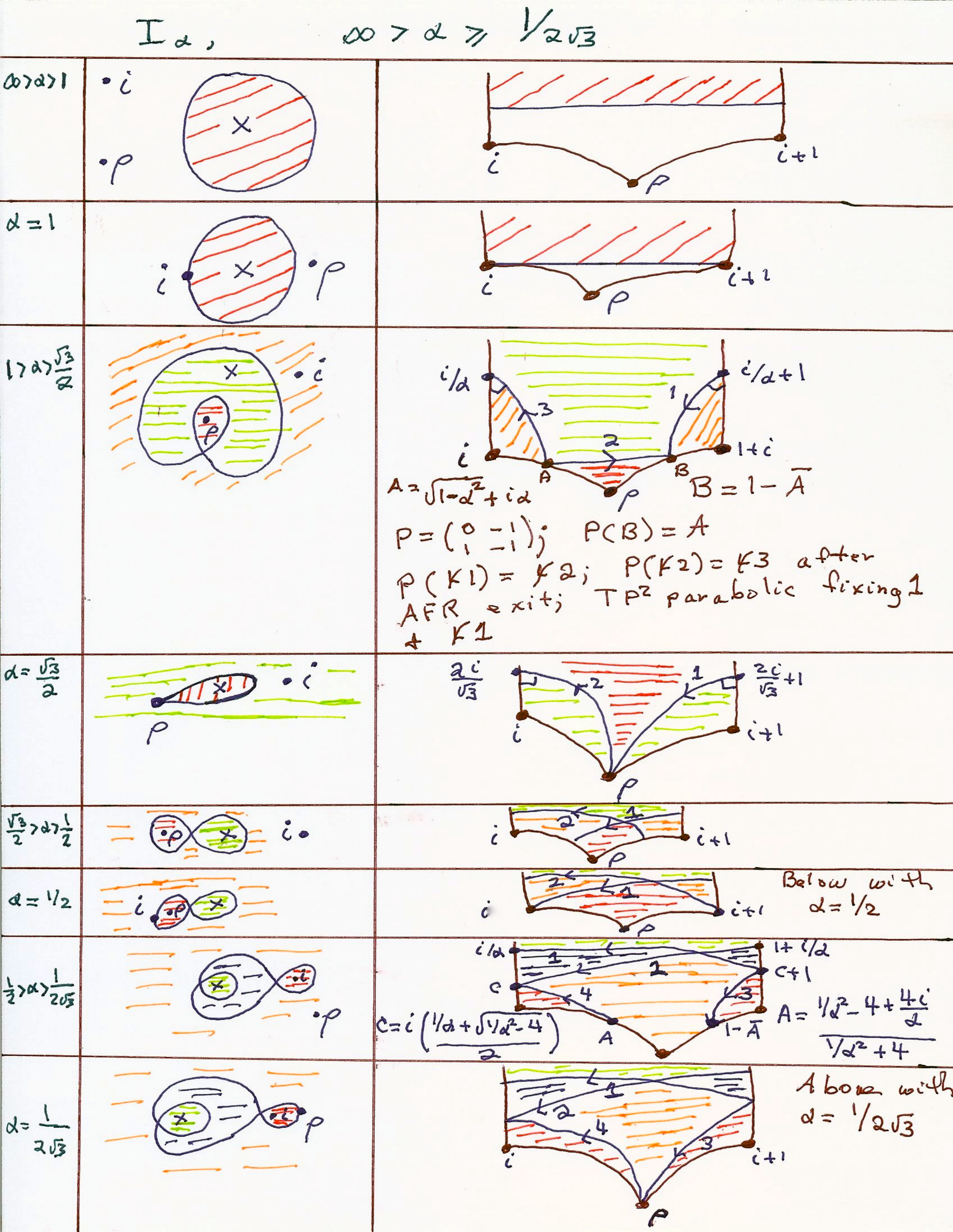}
\caption{$I_\alp, \quad  \infty >  \alp \geq  1/(2\sqrt{3})$}
\end{figure}

\end{document}